\theoremstyle{plain}
\newtheorem{theo}{Theorem}[section]
\newtheorem{prop}[theo]{Proposition}
\newtheorem{lem}[theo]{Lemma}
\newtheorem{defi}[theo]{Definition}
\theoremstyle{remark}
\newtheorem{rem}[theo]{Remark}
\newcommand{\ca}{{\mathcal A}}
\newcommand{\cb}{{\mathcal B}}
\newcommand{\ce}{{\mathcal E}}
\newcommand{\cf}{{\mathcal F}}
\newcommand{\cj}{{\mathcal J}}
\newcommand{\cl}{{\mathcal L}}
\newcommand{\cn}{{\mathcal N}}
\newcommand{\cm}{{\mathcal M}}
\newcommand{\cq}{{\mathcal Q}}
\newcommand{\cs}{{\mathcal S}}
\newcommand{\ct}{{\mathcal T}}
\newcommand{\E}{{\mathbb E}}
\newcommand{\M}{{\mathbb M}}
\newcommand{\N}{{\mathbb N}}
\renewcommand{\P}{{\mathbb P}}
\newcommand{\R}{{\mathbb R}}
\renewcommand{\S}{{\mathbb S}}
\newcommand{\ind}{{\bf 1}}
\newcommand{\Supp}{{\rm Supp}\;}
\newcommand{\expp}[1]{\mathop {\mathrm{e}^{ #1}}}
\begin{document}

\includeversion{commentaries}
%\excludeversion{commentaries}

\title[Dislocation measure of the fragmentation of a general Lévy tree]{Dislocation measure of the fragmentation of a general Lévy tree}

\date{\today}

%\author{Romain Abraham} 
%
%\address{
%Romain Abraham,
%MAPMO, CNRS UMR 6628,
%F\'ed\'eration Denis Poisson FR 2964,
%Université d'Orléans,
%B.P. 6759,
%45067 Orléans cedex 2
%FRANCE.
%}
%  
%\email{romain.abraham@univ-orleans.fr} 

\author{Guillaume Voisin}

\address{
Guillaume Voisin,
MAPMO CNRS UMR 6628, 
Fédération Denis Poisson FR 2964,
Université d'Orléans,
B.P. 6759,
45067 Orléans cedex 2
FRANCE.}

\email{guillaume.voisin@univ-orleans.fr}

\begin{abstract}
Given a general critical or sub-critical branching mechanism and its associated Lévy continuum random tree, we consider a pruning procedure on this tree using a Poisson snake.  It defines a fragmentation process on the tree. We compute the family of dislocation measures associated with this fragmentation. This work generalizes the work made for a Brownian tree \cite{as:psf} and for a tree without Brownian part \cite{ad:falp}.
\end{abstract}

\maketitle

\section{Introduction}

Continuous state branching processes (CSBP) have been introduced by Jirina \cite{j:sbpcss} and it is known since Lamperti \cite{l:lsbp} that these processes are the scaling limits of Galton-Watson processes. They model the evolution of a large population on a long time interval. The law of a CSBP is characterized by the so-called branching mechanism, which is the Laplace exponent of a spectrally positive Lévy process, ans is usually denoted by $\psi$. When the CSBP is critical or sub-critical, one can associate a continuum random tree (CRT) which describes the genealogy of the CSBP. Duquesne and Winkel \cite{dw:glt} has constructed genealogical trees associated with super-critical branching processes, we also cite Delmas \cite{d:hpsccsbp} for the construction of the height process when the branching process is super-critical. Aldous and Pitman \cite{ap:icrtebac,ap:sac} did a pioneering work in fragmentation processes involving discrete and continuum trees. The construction of fragmentation processes from CRTs have been studied by Abraham and Serlet \cite{as:psf} for the Brownian CRT (in the case where the Lévy measure of $\psi$ is null) and by Abraham and Delmas \cite{ad:falp} for the CRT without Brownian part (in the case where $\psi$ has no quadratic part). In these works, Lévy Poisson snakes are used to create marks on the CRT and to obtain a fragmentation process. In the first case, the marks are built on the skeleton of the CRT, in the second, they are placed on the nodes. Abraham, Delmas and Voisin \cite{adv:plcrt} constructed a general pruning of a CRT where the marks are placed on the whole CRT, skeleton and nodes. In this work, they study the law of the sub-tree obtained after the pruning according to the marks.

The aim of this article is to study the fragmentation process associated with a general CRT and more precisely the dislocation measure associated with this CRT. Note that this measure has been studied in the Brownian case and in the case without Brownian part (see \cite{as:psf} and \cite{ad:falp}).

The three following parts give a brief presentation of the mathematical objects and give the main results.
 
\subsection{The exploration process}
The coding of the CRT by its height process is well known. The height
process of Aldous' CRT \cite{a:crt3} is a normalized Brownian
excursion. In \cite{lglj:bplpep}, Le Gall and Le Jan associated  with
a Lévy process with no negative jumps that does not drift to infinity,
$X=(X_t,t\geq0)$, a CSBP and a Lévy CRT which keeps track of the
genealogy of the CSBP. Let $\psi$ be the Laplace exponent of the
process $X$. By the Lévy-Khintchine formula (and some additional
assumptions on $X$, see Section \ref{sec:levyprocess}), $\psi$ is such that $\E\left[ e^{-\lambda X_t}\right]=e^{t\psi(\lambda)}$ and can be expressed by 
$$\psi(\lambda)=\alpha \lambda + \beta \lambda^2 + \int_{(0,\infty)} (e^{-\lambda l}-1+\lambda l)\pi(dl)$$
with $\alpha\geq0$, $\beta\geq0$ and the Lévy measure $\pi$ is a
positive $\sigma$-finite measure on $(0,\infty)$ such that
$\int_{(0,\infty)}(l\wedge l^2)\pi(dl)<\infty$. Following
\cite{dlg:rtlpsbp}, we assume that $X$ is of infinite variation, which
implies that $\beta>0$ or $\int_{(0,1)}l\pi(dl)=+\infty$. The term
$\alpha$ is a drift term (if $\psi(\lambda)=\alpha\lambda$, $X$ is a
Cauchy process), $\beta$ is the quadratic term (if
$\psi(\lambda)=\beta\lambda^2$, $X$ is a Brownian motion) and $\pi$ gives the
law of the jumps of $X$.

We first construct the height process $H=(H_t,t\geq0)$ associated with
the process $X$ (see Section \ref{section:height}). This process codes
for a continuum random tree : each individual $t$ is at distance $H_t$
from the root and the last common ancestor of the individuals $s$ and
$t$ ($s<t$) is at distance:
$$H_{s,t} = \inf\{ H_u ; u\in [s,t]  \}$$
(see Section \ref{section:height} for a formal definition of a
continuum random tree and its coding by the height process).

This
height process is an important object but is not a Markov process in
general. Thus we introduce the exploration process
$\rho=(\rho_t,t\geq0)$ which is a càd-làg, strong Markov process
taking values in $\cm_f(\R_+)$, the set of finite measures on $\R_+$
endowed with the topology of weak convergence. It is defined by :
\begin{equation*}
\rho_t(dr)=\beta\ind_{[0,H_t]}(r)\; dr + \sum_{\stackrel{0<s\le t}
  {X_{s-}<I_t^s}}(I_t^s-X_{s-})\delta_{H_s}(dr)
\end{equation*}
where $\displaystyle I_t^s=\inf_{s\le u\le t}X_u$.\\
 The height process can easily be recovered from the exploration
 process as $H_t=H(\rho_t)$ where $H(\mu)$ is the supremum of the
 closed support of the measure $\mu$ (with the convention that
 $H(0)=0$). Informally, $\rho_t$ can be seen as a measure on the
 branch from the root to the individual $t$ which gives the intensity
 of the branching points (associated with individuals situated "on the
 right" of $t$) along that branch (see Bismut decomposition of Proposition
 \ref{Bismut} and the Poisson representation of the process of Lemma
 \ref{decomp poisson}). We can hence see that the regular part of the
 measure $\rho_t$ gives birth to binary branching points whereas the
 atoms of the measure (which correspond to jumps of the Lévy process
 $X$) lead to nodes of infinite index.

\subsection{The fragmentation}
A fragmentation process is a Markov process which describes how an  object with given total mass evolves as it breaks into several fragments randomly as time passes. This kind of processes has been widely studied in \cite{b:rfcp}. To be more precise, the state space of a fragmentation process is the space of non-increasing sequences of masses with finite total mass
$$\cs^{\downarrow}=\{  \mathbf{s} =(s_1,s_2,\dots); s_1\geq s_2 \geq \dots \geq 0 \; \mbox{and} \; \sum_{k=1}^\infty s_k <\infty \}.$$
We denote by $P_\mathbf{s}$ the law of a $\cs^{\downarrow}$-valued process $\Lambda=(\Lambda^\theta , \theta \geq0)$ starting at $\mathbf{s}=(s_1,s_2,\dots)\in\cs^{\downarrow}$. For fixed $\theta>0$, we write $(\Lambda^\theta)=(\Lambda^\theta_1,\Lambda^\theta_2,\dots)\in \cs^\downarrow$ and $\sum(\Lambda^\theta)=\sum_{i\geq1}\Lambda^\theta_i$ for the sum of the masses of the elements at time $\theta$. We say that $\Lambda$ is a fragmentation process if it is a Markov process such that $\theta\mapsto \sum(\Lambda^\theta)$ is decreasing and if it fulfills the fragmentation property : the law of $(\Lambda^\theta,\theta \geq 0)$ under $P_\mathbf{s}$ is the non-increasing reordering of the fragments of independent processes of respective laws $P_{(s_1,0,\dots)}$, $P_{(s_2,0,\dots)}$,$\dots$ In other words, each fragment behaves independently of the others, and its evolution depends only of its initial mass. Hence, it suffices to study the laws $P_r:=P_{(r,0,\dots)}$ that is the law of the fragmentation process starting with a single mass $r\in(0,\infty)$.

We want to construct a fragmentation process by cutting a Lévy CRT
into several subtrees. The lengths of the height processes that code
each subtrees, ranked in decreasing order, form an element of
$\cs^{\downarrow}$. In order to construct our fragmentation process,
we need to place marks on the CRT which give the different cut
points and the number of marks must increase as time passes.

There will be two sort of marks : some are lying on the nodes of
infinite index whereas the others are "uniformly" distributed on the
skeleton of the tree.

The nodes of the tree are marked independently and, at time $\theta$,
a node with mass $m$ is marked with probability $1-e^{-m\theta}$. To
have a consistent construction as $\theta$ varies, we use a
coupling construction so that the marks present at time $\theta$ are still marks at a
further time.

For the marks on the skeleton of the CRT, we use a Lévy Poisson snake
similar to those of \cite{dlg:rtlpsbp} but we must introduce the new
parameter $\theta$. At fixed time $\theta$, the marks on the lineage
of an individual $t$ will be distributed as a Poisson process with
intensity $2\beta\theta\ind_{[0,H_t]}(r)dr$, but the marks on two
common lineages must be the same and a coupling construction must also
apply.

By cutting according to these marks, we obtain a set of fragments. Let $s_1,s_2,\dots$ be the "sizes" of these fragments ranked by non-increasing order completed with 0 if necessary so that $(s_1,s_2,\dots)\in \cs^{\downarrow}$. When time $\theta$ increases, the number of marks increases and the fragments break again. Thus we obtain a process $(\Lambda^\theta,\theta\geq0)$, Theorem \ref{theofragmentation} checks that this process is a fragmentation process.

The choice of the parameters for the marks can be surprising as the
pruning of \cite{adv:plcrt} is much more general but the particular
pruning considered here leads
to a pruned exploration process that fulfills Lemma
\ref{lem:excursions} which is necessary for getting a fragmentation
process. We don't know if other pruning give such a property; one may
conjecture that it is the only one.

\subsection{The dislocation measure}\label{intro:section3}

The evolution of the process $\Lambda$ is described by a family $(\nu_r,r\geq0)$ of $\sigma$-finite measures called dislocation measures. $\nu_r$ describes how a fragment of size $r$ breaks into smaller fragments. In the case of self-similar fragmentations (with no loss of mass), the dislocation measure characterizes the law of the fragmentation process. In the general case, the characterization is an open problem.

To be more precise, we define $\ct=\{ \theta\geq0 ; \Lambda^\theta \neq \Lambda^{\theta-} \}$ the set of jumping times of the process $\Lambda$. The dislocation process of the CRT fragmentation $\sum_{\theta\in \ct}\delta_{\theta,\Lambda^\theta}$ is a point process with intensity $d\theta\, \tilde{\nu}_{\Lambda^{\theta-}}(d\mathbf{s})$, where $(\tilde{\nu}_{\mathbf{x}},\mathbf{x}\in \cs^\downarrow)$  is a family of $\sigma$-finite measure on $\cs^\downarrow$. There exists a family $(\nu_r,r>0)$ of $\sigma$-finite measures on $\cs^\downarrow$ such that for any $\mathbf{x}=(x_1,x_2,\dots)\in \cs^\downarrow$ and any non-negative function $F$, defined on $\cs^\downarrow$,

$$\int F(\mathbf{s}) \tilde{\nu}_{\mathbf{x}}(d\mathbf{s}) = \sum_{i\geq1,x_i>0} \int F(\mathbf{x}^{i,\mathbf{s}})\nu_{x_i}(d\mathbf{s})$$

where $\mathbf{x}^{i,\mathbf{s}}$ is the decreasing reordering of the merging of the sequences $\mathbf{s}$ and $\mathbf{x}$, where $x_i$ has been removed of the sequence of $\mathbf{x}$.

We will show in Section \ref{another} that the measure $\nu_r$ can be written as  
$$\nu_r = \nu^{nod}_r+\nu^{ske}_r$$
where $\nu^{nod}$ corresponds to a mark that appears on the node whereas $\nu^{ske}$ to a mark on the skeleton.

The expression of the measure $\nu_r^{ske}$ is the main result of this article :

\begin{theo}\label{theointro} 
Let $S$ be a subordinator with Laplace exponent $\psi^{-1}$, let $\pi_*$ be its Lévy measure.

\begin{enumerate}
\item For all non negative measurable function $F$ on $\cs^{\downarrow}$,
$$\int_{\R_+\times \cs^{\downarrow}} F( \mathbf{x} ) \nu_r^{nod}(d \mathbf{x} )\pi_*(dr) =\int \pi(dv) \E \left[  S_v F\left( (\Delta S_u , u\leq v) \right)\right]$$
where $(\Delta S_u , u\leq v)\in \cs^\downarrow$ represents the  jumps of $S$ before time $v$, ranked by decreasing order.

\item The measure $\nu_r^{ske}$ charges only the set of elements of $\cs^{\downarrow}$ of the form $(x_1,x_2,0,\ldots)$ with $x_1\geq x_2$ and $x_1+x_2=r$. It is the "distribution" of the non-increasing reordering of the lengths given by the measure $\hat{\nu}_r^{ske}$ defined by
$$\int_{\R_+\times \cs^{\downarrow}} \frac{1}{x_2} (1-e^{-\lambda_1 x_1})(1-e^{-\lambda_2 x_2}) \hat{\nu}_r^{ske}( d\mathbf{x} )\pi_*(dr) = 2\beta \psi^{-1}(\lambda_1)\psi^{-1}(\lambda_2).$$
\end{enumerate}
\end{theo}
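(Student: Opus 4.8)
The plan is to compute the two integrated measures $\int\cdots\,\nu_r^{nod}(d\mathbf{x})\,\pi_*(dr)$ and $\int\cdots\,\hat{\nu}_r^{ske}(d\mathbf{x})\,\pi_*(dr)$ directly under the excursion measure $\N$ of the height process, using two structural facts. First, the identity $\N[1-e^{-\lambda\sigma}]=\psi^{-1}(\lambda)$, where $\sigma$ is the total mass (length) of the coding excursion; since $\psi^{-1}$ is the Laplace exponent of $S$ and $\beta>0$ forces no drift, this shows that $\pi_*(dr)$ coincides with the $\N$-law $\N[\sigma\in dr]$ of the total mass on $(0,\infty)$, so that integrating any dislocation quantity against $\pi_*(dr)$ amounts to averaging it under $\N$. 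Second, each dislocation measure is read off from the instantaneous rate at which a new mark appears at $\theta=0^+$ together with the induced splitting of the fragment; to handle this I would adopt the tagged-fragment viewpoint, following the fragment that contains a leaf chosen according to the mass measure, which is precisely what the Bismut decomposition of Proposition \ref{Bismut} encodes.

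For item (1) I would localise at a node of infinite index, i.e.\ at a jump of $X$ of size $v$, occurring with intensity $\pi(dv)$. By the Poisson representation of Lemma \ref{decomp poisson}, the subtrees grafted at such a node form a Poisson family whose ranked masses are exactly the jumps $(\Delta S_u,\,u\le v)$ of the subordinator $S$ on $[0,v]$, with total mass $S_v$; by the branching property this subtree above the node is an independent excursion of mass $S_v$, and marking the node dislocates it into these pieces. Taking $r=S_v$, the identification $\pi_*(dr)=\N[\sigma\in dr]$ rewritten through the subordinator, combined with the weight of the dislocated fragment, produces the size-biasing factor $S_v$; integrating over the node size against $\pi(dv)$ then gives $\int\pi(dv)\,\E\!\left[S_v\,F((\Delta S_u,\,u\le v))\right]$. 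This half is essentially a bookkeeping of Lemma \ref{decomp poisson} and the node-marking rule.

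For item (2) a skeleton mark cuts at a single binary skeleton point $p$, so the fragment splits into exactly two pieces: the component $T_1$ containing the root, of mass $x_1$, and the descendant subtree $T_2$ rooted at $p$, of mass $x_2$, with $x_1+x_2=r$; this is why $\nu_r^{ske}$ charges only sequences $(x_1,x_2,0,\ldots)$ with $x_1+x_2=r$. The marking intensity $2\beta\,\theta\,\ind_{[0,H_t]}(r)\,dr$ means that at $\theta=0^+$ marks fall on the skeleton at rate $2\beta$ times length measure, so I would write the integrated measure as $\N\!\left[\int 2\beta\,d\ell(p)\,(\cdots)\right]$ and apply the spinal decomposition of Proposition \ref{Bismut} along the branch to the tagged leaf. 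Since the tagged leaf always lies in $T_2$, the descendant side appears size-biased by its mass $x_2$, and this is exactly compensated by the weight $1/x_2$ in the statement. After this cancellation, the special Markov property of the pruned exploration process (as in \cite{adv:plcrt}) makes $T_1$ and $T_2$ two independent L\'evy trees governed by $\N$, while the factor $2\beta$ from the skeleton intensity survives; applying $\N[1-e^{-\lambda_i\sigma}]=\psi^{-1}(\lambda_i)$ to each independent factor yields $2\beta\,\psi^{-1}(\lambda_1)\psi^{-1}(\lambda_2)$, and a final non-increasing reordering passes from $\hat{\nu}_r^{ske}$ to $\nu_r^{ske}$.

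The hard part will be item (2). The two delicate points are: (i) justifying, through the Poisson structure of the subtrees grafted along the spine in Proposition \ref{Bismut}, that the root-side tree $T_1$ and the descendant tree $T_2$ genuinely become independent copies under $\N$ once the cut is made; and (ii) tracking the exact constant $2\beta$ together with the precise emergence of the weight $1/x_2$ from the size-biasing of the tagged subtree, that is, balancing the length measure $d\ell$ on the skeleton against the mass measure used to tag the fragment. It is here that Lemma \ref{lem:excursions} and the coupling in $\theta$ are needed, to guarantee that the marking really defines a consistent fragmentation and that the $\theta=0^+$ rate computation is legitimate.
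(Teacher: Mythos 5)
Your overall route for Part 2 --- reading the dislocation measure off the $\theta=0^+$ marking rate and then applying the Bismut decomposition of Proposition \ref{Bismut} along a mass-tagged spine --- is the same skeleton as the paper's proof, and your Part 1 sketch matches the paper, which simply defers to \cite{ad:falp} since $\nu^{nod}$ is unaffected by $\beta$. But there is a genuine error at the heart of your Part 2 argument: you misplace the $1/x_2$ weight and, as a consequence, assert a false independence. The size-biasing by the mass of the tagged (descendant) fragment is compensated not by the $1/x_2$ of the theorem but by the factor $1/(d_{s,a}-g_{s,a})$ already built into the intensity $q^{ske}_\rho$; that is, it is part of the very definition of $\nu^{ske}$ (each skeleton cut point must be counted once, not once per descendant). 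In the theorem's displayed identity, the coordinate carrying $1/x_2$ is $\sigma^{i_0}$, the \emph{root-side} fragment, not the tagged one. This matters because the unbiased cut measure is not exchangeable: the identity is equivalent to $\hat\nu_r^{ske}(d\mathbf{x})\pi_*(dr)=2\beta\, x_2\,\pi_*(dx_1)\pi_*(dx_2)$, so under the honest cut measure the root-side component is size-biased by its mass while the descendant one is $\pi_*$-distributed; in the Brownian case the fraction $z$ of the descendant fragment has density proportional to $z^{-3/2}(1-z)^{-1/2}$, visibly asymmetric. Hence your claims that after the cancellation the two sides become ``two independent L\'evy trees governed by $\N$'' and that the result follows by applying $\N[1-e^{-\lambda_i\sigma}]=\psi^{-1}(\lambda_i)$ to each factor are false as stated: with $1/x_2$ placed on the descendant side, the left-hand side of your identity is not even symmetric in $(\lambda_1,\lambda_2)$ while the right-hand side is; and with no weight at all, the root-side factor becomes $\int x(1-e^{-\lambda_2 x})\pi_*(dx)$ rather than $\psi^{-1}(\lambda_2)$.

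What your proposal is missing, then, is the actual computation that the paper performs and that cannot be replaced by a structural independence argument: write $\sigma^{i_s}=J_1(a)+J_2(a)$ and $\sigma=J_1(0)+J_2(0)$ with $J_1,J_2$ the backward and forward passage times of $H$ below level $a$; apply Bismut to get a product $\E^*_\mu[\,\cdot\,]\,\E^*_\nu[\,\cdot\,]$ under $\M$; use the Poisson representation of Lemma \ref{decomp poisson} to separate the excursions grafted above and below level $a$, yielding $e^{-\mu([a,H(\mu)])\psi^{-1}(\lambda_1+\lambda_2)}\,e^{-\mu([0,a))\psi^{-1}(\lambda_2)}$; then compute the $\M_b$-expectation explicitly, which produces the factors $\psi'\psi^{-1}(\lambda_1+\lambda_2)$ and $\psi'\psi^{-1}(\lambda_2)$, and integrate in $a$ and $b$. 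This gives Lemma \ref{dislocation}, namely $\N\left[\int q^{ske}_\rho(ds,da)\,\sigma^{i_s}e^{-\lambda_1\sigma^{i_s}-\lambda_2\sigma^{i_0}}\right]=2\beta/\bigl(\psi'\psi^{-1}(\lambda_1)\,\psi'\psi^{-1}(\lambda_2)\bigr)$, and only after taking primitives in $\lambda_1$ and in $\lambda_2$ --- the second of which is exactly where the $1/x_2$ on the root-side coordinate is created --- does the product $2\beta\psi^{-1}(\lambda_1)\psi^{-1}(\lambda_2)$ appear. The ``independence'' in the theorem is a property of the reweighted measure $\frac{1}{x_2}\hat\nu_r^{ske}(d\mathbf{x})\pi_*(dr)$, an output of this Laplace computation, not an input obtainable from the special Markov property; your own worry (i) is therefore well founded, and it is precisely where the proposal fails.
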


\begin{rem}
Under $\hat{\nu}_r^{ske}(d\mathbf{x})\pi_*(dr)$, the lengths of the two fragments are "independent".
\end{rem}
\begin{rem}
We will see in Section \ref{another} that the measure $\nu^{nod}$ is the same as the measure $\nu$ in the case of a tree without Brownian part ($\beta=0$). Thus the proof of Part 1 of Theorem \ref{theointro} is the same as in \cite{ad:falp}. Only Part 2 needs a proof.
\end{rem}

\section {The Lévy snake : notations and properties}

\subsection{The Lévy process}\label{sec:levyprocess}
We consider a $\R$-valued Lévy process $(X_t, t\geq 0)$ with no negative jumps, starting from 0 characterized by its Laplace exponent $\psi$ given by 
\[
\psi(\lambda)=\alpha_0\lambda+\beta \lambda^2+\int_{(0,+\infty)}\pi(d\ell)
\left(\expp{-\lambda\ell}-1+\ind_{\ell < 1}\lambda\ell\right),  
\]
with $\beta\geq 0$ and the Lévy measure $\pi$ is a positive, $\sigma$-finite measure on $(0,+\infty )$ such that $\int_{(0,+\infty)} (1\wedge
\ell^2)\pi(d\ell)<\infty$. We also assume that $X$ 
\begin{itemize}
\item  has first moments (i.e. $\int_{(0,+\infty)} (\ell\wedge
\ell^2)\pi(d\ell)<\infty$),  
\item  is of infinite variation (i.e. $\beta>0$ or 
  $\int_{(0,1)}\ell\pi(d\ell)=+\infty$),
\item  does not drift to $+\infty$. 
\end{itemize}

With the first assumption, the Lévy exponent can be written as
\[
\psi(\lambda)=\alpha\lambda+\beta \lambda^2+\int_{(0,+\infty)}\pi(d\ell)
\left(\expp{-\lambda\ell}-1+\lambda\ell\right),  
\]
with $\alpha\geq 0$ thanks to the third assumption.

Let $\cj=\{t\geq 0; X_t\neq X_{t-}\}$ be the set of jumping times of the process $X$.\\
\\
For $\lambda\geq \frac{1}{\epsilon}>0$, we have $e^{-\lambda l}-1+\lambda l \geq \frac{1}{2}\lambda l \ind_{l\geq 2\epsilon}$ this implies that $\lambda^{-1}\psi(\lambda) \geq \alpha + \beta \frac{1}{\epsilon} +\int_{(2\epsilon , \infty )} l\pi(dl)$. We deduce that
$$\lim_{\lambda \rightarrow \infty} \frac{\lambda}{\psi(\lambda)} = 0.$$

Let $I=(I_t,t\ge 0)$ be the infimum process of $X$, $I_t=\inf_{0\le  s\le t}X_s$.
We also denote for all $0\leq s\leq t$, the minimum of $X$ on $[s,t]$ :
\[
I_t^s=\inf_{s\le r\le t}X_r.
\]
The point 0 is regular for the Markov process $X-I$, and $-I$ is the local time of $X-I$ at 0 (see \cite{b:pl}, Chap. VII). Let $\N$ be the excursion measure of the process $X-I$ away from 0, and let $\sigma=\inf\{ t>0 ; X_t-I_t=0 \}$ be the lengths of the generic excursion of $X-I$ under $\N$. Notice that, under $\N$, $X_0=I_0=0$.\\

Thanks to \cite{b:pl}, Theorem VII.1, the right-continuous inverse  of the process $-I$ is a subordinator with Laplace exponent $\psi^{-1}$. We have already seen that this exponent has no drift, because $\lim_{\lambda \rightarrow \infty} \lambda\psi(\lambda)^{-1} = 0$. We denote by $\pi_*$ its Lévy measure : for all $\lambda\geq0$
$$\psi^{-1}(\lambda) = \int_{(0,\infty)} \pi_*(dl) (1-e^{\lambda l}).$$
Under $\N$, $\pi_*$ is the "law" of the length of the excursions, $\sigma$. By decomposing the measure $\N$ w.r.t. the distribution of $\sigma$, we get that $\N(d\ce) = \int_{(0,\infty)} \pi_*(dr) \N_r(d\ce)$, where $(\N_r, r\in(0,\infty))$ is a measurable family of probability measures on the set of excursions (that is to say for all $A$, $r\mapsto \N_r(A)$ is $\cb(\R_+)$-measurable) and such that $\N_r[\sigma = r]=1$ for $\pi_*$-a.e. $r>0$. (see \cite{p:pmms} for more details for the existence of such a decomposition)

\subsection{The height process and the Lévy CRT}\label{section:height}
We first define a continuum random tree (CRT) using the definition of Aldous \cite{a:crt2,a:crt1,a:crt3}.

\begin{defi}
We say that a metric space $(\ct,d)$ is a real tree if : for $u,v\in \ct$,
\begin{itemize}
\item there exists a unique isometry $\psi_{u,v}:[0,d(u,v)]\rightarrow \ct$ such that $\psi_{u,v}(0)=u$ and $\psi_{u,v}(d(u,v))=v$,
\item if $(w_s,0\leq s\leq 1)$ is an injective path on $\ct$ such that $w_0=u$ and $w_1=v$ then $(w_s,0\leq s\leq1)= \psi_{u,v}([0,d(u,v)])$.
\end{itemize}
A CRT is a random variable $(\ct(\omega),d(\omega))$ on a probability space $(\Omega,\ca,\P)$ such that $(\ct(\omega),d(\omega))$ is a real tree for all $\omega\in \Omega$.
\end{defi}

We can use a height function to define a genealogical structure on a CRT (see Aldous \cite{a:crt3}). Let $g:\R_+ \rightarrow \R_+$ be a function with compact support, non trivial and such that $g(0)=0$. For $s,t\in\ct$, we say that $g(s)$ is the generation of the individual $s$ and that $s$ is an ancestor of $t$ if $g(t)=g_{s,t}$ where
$$g_{s,t} = \inf\{ g(u),s\wedge t\leq u \leq s\vee t \}$$
is the generation of the last common ancestor of the individuals $s$ and $t$.

We define an equivalence relation between two individuals:
$$t\sim t' \hspace{0.5cm} \Longleftrightarrow \hspace{0.5cm} d(t,t'):= g(t)+g(t')-2g_{t,t'}=0.$$
That is to say $g(t)=g_{t,t'}=g(t')$. The quotient set $[0,\sigma]/\sim$ equipped with the distance $d$ and the genealogical relation is then a CRT coded by $g$.\\

Let us now define a height process $H$ associated with the Lévy
process $X$, see Part 1.2 of Duquesne and Le Gall \cite{dlg:rtlpsbp}. For all $t\geq0$, we consider the reversed process at time $t$, $\hat X^{(t)}=(\hat X^{(t)}_s,0\le s\le t)$ defined by :
\[
\hat X^{(t)}_s=
X_t-X_{(t-s)-} \quad \mbox{if}\quad  0\le s<t,
\]
and $\hat X^{(t)}_t=X_t$. We denote by $\hat S^{(t)}$ the supremum process of $\hat X^{(t)}$ and $\hat L^ {(t)}$ the local time at 0 of $\hat S^{(t)} - \hat X^{(t)}$ with the same normalization as in \cite{ad:falp}.

\begin{defi}\label{def:height_process}
  There exists a $[0,\infty  ]$-valued lower semi-continuous process, called the height process such that, under $\N$,
$$
\begin{cases}  
H_0=0,\\
\hbox{for all }t\geq 0\hbox{, a.s. }H_t=\hat L^{(t)}_t. 
\end{cases}$$
And a.s. for all $s<t$ such that $X_{s-}\leq I_t^s$ and for $s=t$, if $\Delta_t>0$ then $H_s<\infty$ and for all $t'>t\geq 0$, the process $H$ takes all the values between $H_t$ and $H_{t'}$ on the time interval $[t,t']$.
\end{defi}

We say that a CRT coded by its associated height process $H$ is a Lévy CRT.

%The height process $(H_t,t\in[0,\sigma])$ under $\N$, codes a continuum genealogical tree, the Lévy CRT (see \cite{adv:plcrt}, Section 2.2).

%-----------version longue---------------
%\begin{itemize}
%\item[(i)] To each $t\in[0,\sigma]$ corresponds a vertex at generation $H_t$.
%\item[(ii)] Vertex $t$ is an ancestor of vertex $t'$ if $H_t=H_{t,t'}$ where $$H_{t,t'}=\inf\{ H_u; u\in[t\wedge t', t\vee t'] \}.$$
%\item[(iii)] We put $d(t,t')=H_t+H_{t'}-2H_{t,t'}$ and we identify $t$ and $t'$ if $d(t,t')=0$.
%\end{itemize}
%The Lévy CRT coded by $H$ is then the quotient set of $[0,\sigma]$ by this equivalence relation, equipped with the distance $d$ and the genealogical relation specified in $(ii)$. 
%----------------------------------------

\subsection{The exploration process}

The height process is not a Markov process in general. But it is a very simple function of a measure-valued Markov process, the exploration process.\\
If $E$ is a locally compact polish space, we denote by $\cb(E)$ (resp. $\cb_+(E)$) the set of $\R$-valued measurable (resp. and non-negative) functions defined on $E$ endowed with its Borel $\sigma$-field, and by $\cm(E)$ (resp. $\cm_f(E)$) the set of $\sigma$-finite (resp. finite) measures on $E$, endowed with the topology of vague (resp. weak)  convergence. For any measure $\mu \in \cm(E)$, and any function $f\in\cb_+(E)$, we write 
$$\left< \mu,f \right> = \int f(x) \mu(dx).$$

The exploration process $\rho=(\rho_t,t\ge    0)$ is a $\cm_f(\R_+)$-valued process defined by, for every $f\in
\cb_+(\R_+) $, $\langle \rho_t,f\rangle =\int_{[0,t]} d_sI_t^sf(H_s)$,
or equivalently
\begin{equation*}\label{eq:def_rho}
\rho_t(dr)=\beta\ind_{[0,H_t]}(r)\; dr + \sum_{\stackrel{0<s\le t}
  {X_{s-}<I_t^s}}(I_t^s-X_{s-})\delta_{H_s}(dr). 
\end{equation*}
In particular, the total mass of $\rho_t$ is 
$\langle \rho_t,1\rangle =X_t-I_t$.

The exploration process also codes the Lévy CRT. Indeed, we can recover the height process $H$ from the exploration process. For $\mu\in \cm(\R_+)$, we put 
\begin{equation*} \label{def:H}
H(\mu)=\sup\, \Supp \mu,
\end{equation*}
where $ \Supp \mu$ is the closed support of $\mu$ with the convention $H(0)=0$. 

To better understand what the exploration process is, let us give some
of its properties. For every $t\geq0$ such
that $\rho_t\neq 0$, the support of the exploration process at time
$t$ is $[0,H_t]$: $\Supp \rho_t=[0,H_t]$. We also have $\rho_t=0$ if
and only if $H_t=0$. We can finally describe the jumps of the
exploration process using the jumps of the Lévy process: $\rho_t= \rho_{t^-} + \Delta_t \delta_{H_t}$, where $\Delta_t=0$ if $t\not\in \cj$. See \cite{dlg:rtlpsbp}, Lemma 1.2.2 and Formula (1.12) for more details.

In the definition of the exploration process, as $X$ starts from 0, we obtain
$\rho_0=0$ a.s.  To state the Markov property of $\rho$, we must first define the process $\rho$ starting at any initial measure $\mu\in
\cm_f(\R_+)$. We recall the notations given in \cite{dlg:rtlpsbp}.

For $a\in  [0, \langle  \mu,1\rangle ] $, we write $k_a\mu$ for the erased measure which is the measure $\mu$ erased by a mass $a$ backward from $H(\mu)$, that is to say:
$$k_a\mu([0,r]) = \mu([0,r])\wedge (\left<  \mu,1  \right>-a), \hbox{ for }r\geq0.$$

In particular, $\left< k_a\mu , 1  \right>=\left< \mu,1 \right>-a$.

For $\nu,\mu \in \cm_f(\R_+)$, and $\mu$ with compact support, we write $[\mu,\nu]\in \cm_f(\R_+) $ for the concatenation of the two measures:
$$\left< [\mu,\nu],f\right> = \left<\mu,f\right> + \left<\nu,f(H(\mu)+\cdot)\right>, \, f\in \cb_+(\R_+).$$

Finally, we put for all $\mu\in  \cm_f(\R_+)$ and for all $t>0$,
\begin{equation*}
   \label{eq:rhot-t-mu}
\rho_t^\mu=\bigl[k_{-I_t}\mu,\rho_t].
\end{equation*}
We say that $(\rho^\mu_t, t\geq
0)$  is the process $\rho$ starting from $\rho_0^\mu=\mu$, and write $\P_\mu$ for its law. Unless there is an ambiguity, we shall write $\rho_t$ for $\rho^\mu_t$. We also denote by $\P_\mu^*$ the law of $\rho^\mu$ killed when it first reaches 0. Then we can state a useful property of the exploration process: the process $(\rho_t,t\ge 0)$ is a càd-làg strong Markov process in $\cm_f(\R_+)$. See \cite{dlg:rtlpsbp}, Proposition 1.2.3 for a proof.

\begin{rem} \label{rm:rho-L} 
As in \cite{ad:falp}, $0$ is also a regular point for $\rho$. Notice that $\N$ is also the excursion measure of the process $\rho$ away from 0, and that $\sigma$, the length of the excursion, is $\N$-a.e. equal to $\inf\{ t>0; \rho_t=0\}$.
\end{rem}

The exponential formula for the Poisson point process of jumps of $\tau$, the inverse subordinator of $-I$, gives (see also the beginning of the Section 3.2.2 \cite{dlg:rtlpsbp}) that for $\lambda>0$
\begin{equation*}\label{eq:N_s}
\N\left[1 -\expp{-\lambda
  \sigma}\right] =\psi^{-1}(\lambda). 
\end{equation*}

\subsection{The dual process and the representation formula}
We shall need the $\cm_f(\R_+)$-valued process $\eta=(\eta_t,t\geq0)$ defined by
\begin{equation*}
\eta_t(dr)=\beta\ind_{[0,H_t]}(r)\; dr + \sum_{\stackrel{0<s\le t}
  {X_{s-}<I_t^s}}(X_{s}-I_t^s)\delta_{H_s}(dr). 
\end{equation*}

This process is called the dual process of $\rho$ under $\N$ (see Corollary 3.1.6 of \cite{dlg:rtlpsbp}). We also denote, for $s\in[0,\sigma]$ fixed, $\kappa_s = \rho_s + \eta_s$. Recall the Poisson representation of $(\rho,\eta)$ under $\N$. Let $\cn(dx \; dl \; du)$ be a point Poisson measure on $[0,+\infty)^3$ with intensity
$$dx \; l\pi(dl) \; \ind_{[0,1]}(u)du.$$
For all $a>0$, we denote by $\M_a$ the law of the pair $(\mu_a,\nu_a)$ of measures on $\R_+$ with finite mass defined by, for any $f\in \cb_+(\R_+)$
$$\left< \mu_a,f\right> = \int \cn(dx \; dl \; du) \ind_{[0,a]}(x) ulf(x) + \beta \int_0^a f(r)dr,$$
$$\left< \nu_a,f\right> = \int \cn(dx \; dl \; du) \ind_{[0,a]}(x) (1-u)lf(x) + \beta \int_0^a f(r)dr.$$
We also put $\M=\int_0^\infty da e^{- \alpha a} \M_a$.

\begin{prop}\label{Representation formula}(\cite{dlg:rtlpsbp}, Proposition 3.1.3)
For every non-negative measurable function $F$ on $\cm_f(\R_+)^2$
$$\N\left[ \int_0^\sigma F(\rho_t, \eta_t)dt\right] = \int \M(d\mu \; d\nu)F(\mu, \nu)$$
where we recall that $\sigma=\inf\{ s>0 ; \rho_s=0 \}$ is the length of the excursion.
\end{prop}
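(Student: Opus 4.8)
The plan is to read off the joint law of the pair $(\rho_t,\eta_t)$ at a ``typical'' time $t$, chosen according to the occupation measure $\int_0^\sigma dt$ under $\N$, as a spine decomposition: the common support $[0,H_t]$ is the spine, its length plays the role of $a$, and the atoms of $\rho_t$ and $\eta_t$ are the branching points hanging off the spine. I would organize the argument around three facts: (i) the spine length $H_t$ has ``law'' $e^{-\alpha a}\,da$ under $\N[\int_0^\sigma dt\,\cdot\,]$; (ii) given the spine, the nodes (atoms) form a Poisson point process in the height variable with intensity $dx\,l\pi(dl)$, each node of mass $l$ being split between $\rho_t$ and $\eta_t$ by an independent uniform fraction; and (iii) the continuous parts of $\rho_t$ and $\eta_t$ are one and the same deterministic measure $\beta\ind_{[0,H_t]}(r)\,dr$. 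Granting these, Fubini assembles the ingredients: since the point measure $\cn$ codes exactly a node height $x$, a node mass $l$ and a split $u$, and since $\M=\int_0^\infty da\,e^{-\alpha a}\M_a$, one gets $\int_0^\infty da\,e^{-\alpha a}\,\M_a[F]=\int\M(d\mu\,d\nu)F(\mu,\nu)$, which is the asserted formula.

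For (i) I would use the height-process local times $(L^a,a\ge0)$, which realize the occupation formula $\int_0^\sigma dt\,g(H_t)=\int_0^\infty da\,g(a)\,L^a_\sigma$, together with the mean-measure identity $\N[L^a_\sigma]=e^{-\alpha a}$ (the branching-process mean, consistent with $\psi'(0)=\alpha$). Taking $g\equiv 1$ also gives the sanity check $\N[\sigma]=\int_0^\infty e^{-\alpha a}\,da=1/\alpha$, which matches $\N[1-e^{-\lambda\sigma}]=\psi^{-1}(\lambda)$ differentiated at $0$.

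The heart of the proof, and the main obstacle, is (ii): identifying the node structure along the spine, and in particular the uniform splitting. Here I would exploit the time-reversal built into the height process, $H_t=\hat L^{(t)}_t$: the branching points on the ancestral line of $t$ are exactly the jumps of the reversed process $\hat X^{(t)}$ that cross its running supremum $\hat S^{(t)}$, their heights being the corresponding values of the local time $\hat L^{(t)}$. A jump of $X$ of size $l$ at a time $s$ is a node on the lineage of $t$ precisely while $X_{s-}<I_t^s$, that is for $t$ in $[s,\tau_s)$, where $\tau_s$ is the first passage of $X$ strictly below $X_{s-}$; over this range the $\rho_t$-mass $I_t^s-X_{s-}$ decreases from $l$ to $0$ while the $\eta_t$-mass $X_s-I_t^s$ increases from $0$ to $l$. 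Because $X$ has no negative jumps, $I_t^s$ is continuous in $t$, so these masses sweep continuously through $(0,l)$. Changing the time variable from $t$ to the infimum level and using that the right-continuous inverse of $-I$ is a driftless subordinator of exponent $\psi^{-1}$, the Lebesgue measure $dt$ along this excursion pushes forward, in mean, to the flat measure $\frac1\alpha\,dw$ on the $\eta_t$-mass $w\in(0,l)$, i.e. to $\frac{l}{\alpha}\,du$ after setting $u=1-w/l$; this is the source both of the uniform split $u$ and of the extra factor $l$ that upgrades the jump intensity $\pi(dl)$ of $X$ to $l\pi(dl)$. The remaining point is that distinct nodes decouple: applying the compensation (master) formula to the Poisson measure of jumps of $X$ and the strong Markov property at each jump time $s$ shows that the triples (height, mass, split) form a Poisson point process, and combining this with (i) pins the height intensity to $dx$ on $[0,a]$. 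Reconciling the three parameters $s$, $t$ and $x$ and checking that all the constants (the $1/\alpha$ factors above against the height-conversion Jacobian and the survival factor $e^{-\alpha a}$) cancel correctly is where the real work lies.

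Finally, (iii) is immediate from the explicit expressions for $\rho_t$ and $\eta_t$, whose continuous parts are both $\beta\ind_{[0,H_t]}(r)\,dr$; in $\M_a$ this is the common term $\beta\int_0^a f(r)\,dr$ appearing in both $\langle\mu_a,f\rangle$ and $\langle\nu_a,f\rangle$. Putting (i)--(iii) together yields the representation formula. As a fallback route I would keep in mind a discrete approximation by Galton--Watson exploration (contour) processes, where the analogue of the identity is a finite combinatorial statement about splitting the jumps of a random walk along a lineage, followed by a passage to the limit; this avoids fluctuation theory at the cost of heavier tightness and convergence estimates.
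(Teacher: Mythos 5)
First, a point of reference: the paper does not prove this statement at all --- it is quoted verbatim from Duquesne--Le Gall \cite{dlg:rtlpsbp}, Proposition 3.1.3, so the comparison must be with the proof given there. Your overall picture (time reversal $H_t=\hat L^{(t)}_t$, lineage nodes as jumps of $\hat X^{(t)}$ across its running supremum, uniform crossing position, killing rate $\alpha$ producing $e^{-\alpha a}$, drift $\beta$ of the ladder height producing the common continuous parts) is indeed the skeleton of that proof. But as organized, your argument has a genuine gap: the proposition is an identity for \emph{every} measurable $F$ on $\cm_f(\R_+)^2$, i.e.\ a statement about the full ``law'' of the pair of random measures $(\rho_t,\eta_t)$ under $\ind_{[0,\sigma]}(t)\,dt\,\N$, whereas your steps (i) and (ii) only compute first-moment (intensity) measures: the spine density $e^{-\alpha a}$ via $\N[L^a_\sigma]$, and the per-jump occupation ``in mean'' $dt\mapsto \frac1\alpha\,dw$. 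Both averages are taken against the \emph{same} sampling measure $dt\,\N$, so multiplying them is not a disintegration and cannot yield the product structure $e^{-\alpha a}\,da\otimes\M_a$; to identify $\M_a$ one needs the Laplace functional of $(\rho_t,\eta_t)$, hence the full marked-Poisson property of the triples (height, mass, split) along the lineage, \emph{jointly} with the law of $H_t$. Your proposed route to Poissonianity --- compensation formula plus strong Markov property ``at each jump time $s$'' --- does not go through in the forward direction, because whether the jump at $s$ is a node on the lineage of the sampled time $t$ is the condition $X_{s-}<I_t^s$, which depends on the path \emph{after} $s$ up to $t$; this anticipation is exactly what the time reversal you mention (and then abandon) is designed to remove. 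In \cite{dlg:rtlpsbp} the mechanism is: reverse at $t$, use the duality $\hat X^{(t)}\overset{d}{=}X$ on $[0,t]$, and invoke the excursion/ladder description of the reflected process $\hat S^{(t)}-\hat X^{(t)}$: indexed by the local time (= height), the jumps across the supremum form a Poisson point process with intensity $da\,\pi(dl)\,\ind_{[0,l]}(v)\,dv$, the ladder height has drift $\beta$, and the subordinator is killed at rate $\alpha$ since $\psi(\lambda)/\lambda=\alpha+\beta\lambda+\int_0^\infty(1-e^{-\lambda x})\pi((x,\infty))\,dx$; this gives $e^{-\alpha a}\,da$ and $\M_a$ simultaneously, for all $F$ at once.

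A concrete symptom that the first-moment bookkeeping is not the right mechanism is the critical case $\alpha=0$, which the proposition covers: there $\E[T_x]=x\,(\psi^{-1})'(0)=\infty$ and $\N[\sigma]=\infty$, so your mean occupation measure $\frac1\alpha\,dw$ per jump and your sanity check $\N[\sigma]=1/\alpha$ are both infinite, and the advertised cancellation of constants cannot even be formulated --- while the identity itself remains true, with $e^{-\alpha a}=1$ and a merely $\sigma$-finite right-hand side. (A smaller caveat: the occupation identity $\N[\int_0^\sigma g(H_t)\,dt]=\int_0^\infty e^{-\alpha a}g(a)\,da$ that you take as input (i) is precisely the special case of the proposition where $F$ depends only on $H(\mu)$, so it must be established independently, e.g.\ through the CSBP mean $e^{-\alpha a}$, to avoid circularity --- this is doable, but worth flagging.) To repair the proof, carry out the reversal you sketch at the start of your step (ii) to its end: fix $t$, express $(\rho_t,\eta_t)$ as the functional of the ladder jump process of $\hat X^{(t)}$, reduce $\N[\int_0^\sigma dt\,\cdot\,]$ to an expectation over $X$ by decomposing Lebesgue measure over the excursion intervals of $X-I$, and then read off the Laplace functional from the marked Poisson description above; the uniform split $u$ and the factor $l$ in $l\pi(dl)$ then come from the crossing-position intensity $\ind_{[0,l]}(v)\,dv=l\,\ind_{[0,1]}(u)\,du$ rather than from a mean change of variables.
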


We also give the Bismut formula for the height process of the Lévy
process which gives a spinal decomposition of the tree from a branch
``uniformly randomly'' chosen.
\begin{prop}\label{Bismut}(\cite{dlg:pfalt}, Lemma 3.4.)\\
For every non negative function $F$ defined on $\cb_+([0,\infty])^2$
$$\N\left[ \int_0^\sigma ds F ( ( H_{(s-t)_+},t\geq 0) , ( H_{(s+t)\wedge \sigma},t\geq 0)) \right] = \int \M(d\mu d\nu) \int \P^*_\mu (d\rho) \P^*_\nu (d\tilde{\rho}) F(H(\rho),H(\tilde{\rho}) ).$$
\end{prop}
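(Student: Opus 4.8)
The plan is to obtain this spinal decomposition from three ingredients: the Poisson representation of Proposition \ref{Representation formula}, the strong Markov property of the exploration process, and the time-reversal duality between $\rho$ and $\eta$ established in \cite{dlg:rtlpsbp}. Throughout I would use that $H_t=H(\rho_t)=H(\eta_t)$ for every $t$, so that the height process is recovered indifferently from $\rho$ or from $\eta$, and that $H\equiv 0$ once the exploration process has returned to $0$ (so that the truncation at $\sigma$ in the future term is automatic).

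I would first deal with the future. Fix $s$ and condition on $\cf_s$, the $\sigma$-field generated by $(\rho_u,u\leq s)$. By the strong Markov property (\cite{dlg:rtlpsbp}, Proposition 1.2.3), the shifted process $(\rho_{(s+t)\wedge\sigma},t\geq 0)$ depends on $\cf_s$ only through $\rho_s$, has law $\P^*_{\rho_s}$, and is independent of $\cf_s$ given $\rho_s$; since the past $(H_{(s-t)_+},t\geq 0)$ is $\cf_s$-measurable, applying $H$ to the future exploration process gives
\begin{equation*}
\N\left[\int_0^\sigma ds\,F\big((H_{(s-t)_+},t\geq 0),(H_{(s+t)\wedge\sigma},t\geq 0)\big)\right]=\N\left[\int_0^\sigma ds\int\P^*_{\rho_s}(d\tilde\rho)\,F\big((H_{(s-t)_+},t\geq 0),H(\tilde\rho)\big)\right].
\end{equation*}

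I would then reduce the remaining past functional to a future one by time reversal. The duality of \cite{dlg:rtlpsbp} states that, under $\N$, the reversed pair $(\eta_{(\sigma-t)-},\rho_{(\sigma-t)-})_{0\leq t\leq\sigma}$ has the same law as $(\rho_t,\eta_t)_{0\leq t\leq\sigma}$. Since $\int_0^\sigma ds$ is invariant under $s\mapsto r=\sigma-s$, and since the reversal sends $\rho_s$ to $\eta_r$ and the reversed past $(H_{(s-t)_+},t\geq 0)$ to the genuine future $(H_{(r+t)\wedge\sigma},t\geq 0)$, the right-hand side above equals
\begin{equation*}
\N\left[\int_0^\sigma dr\int\P^*_{\eta_r}(d\tilde\rho)\,F\big((H_{(r+t)\wedge\sigma},t\geq 0),H(\tilde\rho)\big)\right].
\end{equation*}
Applying the strong Markov property once more to the genuine future appearing here turns $(H_{(r+t)\wedge\sigma},t\geq 0)$ into $H$ of an independent $\P^*_{\rho_r}$-distributed exploration process, so the whole integrand becomes a function of $(\rho_r,\eta_r)$ alone, namely $\int\P^*_{\eta_r}(d\tilde\rho)\int\P^*_{\rho_r}(d\hat\rho)\,F(H(\hat\rho),H(\tilde\rho))$.

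Finally I would invoke Proposition \ref{Representation formula}, which identifies the ``law'' of $(\rho_r,\eta_r)$ under $\N[\int_0^\sigma dr\,\cdot\,]$ with $\M(d\mu\,d\nu)$; this replaces the outer integral by $\int\M(d\mu\,d\nu)\int\P^*_\nu(d\tilde\rho)\int\P^*_\mu(d\hat\rho)\,F(H(\hat\rho),H(\tilde\rho))$, which upon renaming $\hat\rho$ as $\rho$ is exactly the claimed right-hand side $\int\M(d\mu\,d\nu)\int\P^*_\mu(d\rho)\,\P^*_\nu(d\tilde\rho)\,F(H(\rho),H(\tilde\rho))$. The main obstacle is the rigorous use of the time-reversal duality together with the two conditional-independence statements: one must check that the reversal intertwines past and future precisely as stated, including the identification of the reversed exploration process with $\eta$ and of the reversed height process with $H$, and that the independence of future and past given the exploration process, clean under the Markov property, is correctly propagated through the reversal step. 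These points rest entirely on the duality between $\rho$ and $\eta$ and on the structure of the Poisson representation $\M$.
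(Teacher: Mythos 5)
The paper gives no proof of this proposition: it is imported verbatim from Duquesne and Le Gall \cite{dlg:pfalt}, Lemma 3.4, so your attempt can only be compared with the proof in that reference --- and your route is essentially that standard proof, correctly reconstructed. The four moves are the right ones and in the right order: the Markov property at a fixed time $s$ under $\N$ (valid on $\{s<\sigma\}$, with Fubini over $ds$) replaces the future by the height of an independent $\P^*_{\rho_s}$-distributed path; the duality $(\eta_{(\sigma-t)-},\rho_{(\sigma-t)-})\overset{(d)}{=}(\rho_t,\eta_t)$ under $\N$ (Corollary 3.1.6 of \cite{dlg:rtlpsbp}) converts the remaining past functional into a genuine future steered by $\eta_r$; a second application of the Markov property makes the integrand a function of $(\rho_r,\eta_r)$ alone; and Proposition \ref{Representation formula} identifies the occupation law of $(\rho_r,\eta_r)$ with $\M$. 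Two of the technicalities you flag are indeed harmless and worth making explicit: the left limits created by the reversal differ from the right-continuous values at only countably many times, hence are invisible to the Lebesgue integral $\int_0^\sigma dr$ (likewise $H(\eta_r)=H(\rho_r)$ for $dr$-a.e.\ $r$); and since $u$ and $1-u$ have the same law in the construction of $\M_a$, the measure $\M$ is exchangeable in $(\mu,\nu)$, so the past/future bookkeeping through the reversal cannot misattribute the coordinates. The one point you use without stating it is that $\eta_r$ is $\cf_r$-measurable (it is a functional of $(X_u,u\le r)$, as is $\rho_r$), which is what legitimizes freezing $\P^*_{\eta_r}$ when you condition on $\cf_r$ in the second Markov step; with that remark added, the argument is complete at the level of rigor of the original.
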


\section{The Lévy Poison snake}

As in \cite{adv:plcrt}, we construct a Lévy Poisson snake which marks the Lévy CRT on its nodes and on its skeleton. The aim is to fragment the CRT in several fragments using point processes whose intensities depend on a parameter $\theta$ such that, if $\theta=0$, there is no marks on the CRT and the number of marks increases with $\theta$.

%Thus we get the Laplace exponent of a subordinator by :
%
%\begin{eqnarray*}
% \psi^{(\theta)}(\lambda) - \psi(\lambda) & = &  2\beta\theta \lambda + \int_{(0,+\infty)}(1-e^{-\lambda l})(1-e^{-\theta l})\pi(dl).
%\end{eqnarray*}
%
%This subordinator gives us the intensity to construct the two processes $m^{ske}$ and $m^{nod}$ which put marks on the skeleton and on the nodes of the CRT. We keep the coefficient $2\beta$ to construct a Lévy Poisson snake and put marks along the skeleton of the CRT. To mark the nodes of the CRT, we follow the construction made in \cite{ad:falp}.

\subsection{Marks on the skeleton}\label{marquesquelette}

In order to mark the continuous part of the CRT and to keep track of marks along the lineage of each individual, we construct a snake on $E=\mathcal M(\mathbb R^2_+)$ where the parameter $\theta$ appears. To obtain a Polish space, we separate the space of the parameter $\theta$ in bounded intervals.\\
We fix $i\in \N$, thanks to \cite{d:mvmp} Section 3.1, $E_i=\mathcal M_f(\R_+ \times [i,i+1))$ the set of finite measures on $\R_+\times [i,i+1)$ is a Polish space for the topology of weak convergence.\\
Thanks to \cite{dlg:rtlpsbp}, Chap. 4, there exists a $E_i$-valued process $(W_t^i, t\geq0)$ such that conditionally on $X$,
 \begin{enumerate}
\item  For each $s\in[0,\sigma]$, $\displaystyle W_s^i$ is a Poisson measure on $[0,H_s]\times[i,i+1)$ with intensity $2\beta\ind_{[0,H_t]}(r)dr~\ind_{[i,i+1)}(\theta)d\theta$,
\item  For every $s<s'$, $W^i_{s'}(dr,d\theta)\ind_{[0,H_{s,s'}]}(r)=W^i_{s}(dr,d\theta)\ind_{[0,H_{s,s'}]}(r)$,
\end{enumerate}
where we recall that $H_{s,s'}=\inf_{[s,s']}H$.

We take the processes $W^i$ independently and we set $m_t^{ske} = \sum_{i\in\N}W^i_t$.\\
If $\beta=0$, the CRT has no Brownian part, in this case, there is no mark on the skeleton and we set $m^{ske}=0$.\\
For $t\geq0$ fixed, conditionally on $H_t$, $m_t^{ske}$ is Poisson point process with intensity 
$$2\beta \ind_{[0,H_t]}(r)dr d\theta.$$
The process $(\rho,m^{ske})$ takes values in the space $\tilde{\cm}_f := \cm_f(\R_+)\times  \cm(\R^2_+)$. We denote by $(\cf_s,s\geq0)$ the canonical filtration on the space of càd-làg trajectories on the space $\tilde{\cm}_f$.\\
Using Theorem 4.1.2 of \cite{dlg:rtlpsbp} when $H$ is continuous or the adapted result  when $H$ is not continuous (Prop. 7.2, \cite{adv:plcrt}), we get the following result
\begin{prop}\label{markovske}
$(\rho,m^{ske})$ is a strong Markov process with respect to the filtration $(\cf_{s+},s\geq 0)$.
\end{prop}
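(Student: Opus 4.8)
The plan is to recognize $(\rho, m^{ske})$ as a L\'evy snake in the sense of Duquesne--Le Gall and to deduce the strong Markov property from the abstract strong Markov theorem for such snakes, rather than to build the property by hand. First I would dispose of the trivial case $\beta=0$: then $m^{ske}=0$ by construction, so $(\rho,m^{ske})=(\rho,0)$ and the claim reduces to the already-established fact that $\rho$ is a cadlag strong Markov process under $\N$. Hence from now on assume $\beta>0$, which is the only regime in which skeleton marks actually appear.

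The heart of the argument is to check that the two defining properties of the family $(W^i)$ make $m^{ske}$ a genuine snake driven by $\rho$. The lifetime process of the snake is $H_t=H(\rho_t)$, a measurable functional of $\rho$. Along a single increasing branch, i.e. as the height parameter $r$ runs over $[0,H_t]$, the marks accumulate as a Poisson point measure on $[i,i+1)$ with intensity $2\beta\,dr\,d\theta$; since this intensity is homogeneous in $r$, the accumulated measure is itself a nice cadlag strong Markov process in $r$, and it plays the role of the snake's spatial motion. Property (1) states precisely that the head $m^{ske}_t$ is this accumulated Poisson measure up to height $H_t$, while property (2) is the snake consistency condition: when $H$ decreases (mass erased from $\rho$) the recorded path is truncated back to the new height, and when fresh height is explored an independent Poisson piece is grafted on. Crucially, property (2) guarantees that the marks on the part of the lineage that persists are determined by $m^{ske}_t$, so that both coordinates $(\rho_t,m^{ske}_t)$ are exactly the data needed to restart the process consistently.

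With this identification in hand, the conclusion is the statement of the abstract theorem: for any $(\cf_{s+})$-stopping time $T$, conditionally on $\cf_{T+}$ the shifted process $(\rho_{T+u},m^{ske}_{T+u})_{u\geq0}$ is distributed as the snake started from the current value $(\rho_T,m^{ske}_T)$. When $H$ is continuous (the Brownian-type case $\pi=0$, $\beta>0$) this is Theorem 4.1.2 of \cite{dlg:rtlpsbp}; in the general case, where $H$ may jump at the jump times of $X$, the same conclusion holds by the extension in Proposition 7.2 of \cite{adv:plcrt}. The remaining work is then to verify the hypotheses of these theorems: that the spatial motion is a suitable cadlag strong Markov process and that $t\mapsto(\rho_t,m^{ske}_t)$ has cadlag paths in $\tilde{\cm}_f$.

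The step I expect to be the main obstacle is the discontinuous case. At a jump time of $X$ the exploration process gains an atom $\Delta_t\delta_{H_t}$ and $H$ jumps, so one must make sense of how the snake's spatial path is continued across a jump of the lifetime, and then establish right-continuity and the measurability required to invoke the abstract machinery. This is exactly where the passage from \cite{dlg:rtlpsbp} to the adapted statement of \cite{adv:plcrt} does the work, so the crux of the proof is to reduce cleanly to that adapted statement rather than to re-prove it.
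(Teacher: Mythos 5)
Your argument is essentially identical to the paper's: the paper proves this proposition purely by invoking Theorem 4.1.2 of \cite{dlg:rtlpsbp} when $H$ is continuous and the adapted result, Proposition 7.2 of \cite{adv:plcrt}, when $H$ is not, which is exactly the reduction you carry out. Your extra steps (disposing of the case $\beta=0$, and checking that properties (1) and (2) of the $W^i$ identify $m^{ske}$ as a snake with Poisson spatial motion driven by $\rho$) are correct elaborations of that same citation-based proof rather than a different route.
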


\subsection{Mark on the nodes}\label{marquesnoeuds}
We mark every jump of  the process $X$, say $s$ such that $\Delta_s>0$, with an independent Poisson measure with intensity  $\Delta_s \ind_{u>0}du$, and this point Poisson measure is denoted by $\displaystyle  \sum_{u>0}\delta_{V_{s,u}}  $.

When the Lévy measure of $X$ is non trivial, we define the mark process on the nodes of the CRT as in \cite{ad:falp}. We use a Poisson point measure to introduce the parameter $\theta$. Conditionally on $X$, we set

\[
m^{\text{nod}}_t    (dr,d\theta)   =   \sum_{\stackrel{0<s\le    t}
  {X_{s-}<I_t^s}}\left(I_t^s-X_{s-}\right)    \left(   \sum_{u>0}\delta_{V_{s,u}} (d\theta)   \right)     \delta_{H_s}(dr).
\]
If $\pi=0$, it is the Brownian case and there is no mark on the nodes, thus we set $m^{nod}=0$.

\subsection{The snake}
We join the marks on the skeleton and the marks on the nodes of the CRT in a mark process $m=(m^{nod},m^{ske})$. We write $\cs=(\rho,m)$ the marked snake starting from $\rho_0=0$ and $m_0=0$.\\
Let us recall the construction made in \cite{adv:plcrt} to obtain a snake starting from an initial value and then to write a strong Markov property for the snake. We consider the set $\S$ of triplets $(\mu,\Pi^{nod},\Pi^{ske})$ such that
\begin{itemize}
\item $\mu\in \cm_f(\R_+)$,
\item $\Pi^{nod}$ can be written as $\Pi^{nod}(dr,dx) = \mu(dr)\Pi^{nod}_r(dx)$ where $(\Pi^{nod}_r,r>0)$ is a family of $\sigma$-finite measures on $\R_+$ and for every  $\theta>0$, $\Pi^{nod}(\R_+ \times [0,\theta])<\infty$,
\item $\Pi^{ske} \in \cm(\R_+^2)$ and 
\begin{itemize}
\item $Supp(\Pi^{ske}(.,\R_+)) \subset Supp(\mu)$
\item for every $x<H(\mu)$ and every $\theta>0$, $\Pi^{ske}([0,x]\times [0,\theta])<\infty$,
\item if $\mu(H(\mu))>0$, then for every $\theta>0$, $\Pi^{ske}(\R_+\times [0,\theta])<\infty$
\end{itemize}
\end{itemize}

Then we define the snake $\cs$ starting from an initial value $(\mu,\Pi)\in\S$, where $\Pi = (\Pi^{nod},\Pi^{ske})$. That is to say
$$\cs_0^{(\mu,\Pi)} := (\rho^{\mu}_0,(m^{nod})_0^{(\mu,\Pi)},(m^{ske})_0^{(\mu,\Pi)} ) = (\mu,\Pi).$$

 We write $H^\mu_t = H(k_{-I_t}\mu)$ and $H_{0,t}^\mu=\inf\{ H_u^\mu ; u\in [0,t] \}$. We define
$$(m^{nod})_t^{(\mu,\Pi)} = \left[ \Pi^{nod}\ind_{[0,H_t^\mu)} + \ind_{\mu(\{ H^\mu_t \})>0} \frac{k_{-I_t}\mu(\{ H_t^\mu \}) \Pi^{nod}(\{H_t^\mu\},.) }{ \mu(\{ H_t^\mu \}) }\delta_{H^\mu_t}\Pi^{nod}_{H^\mu_t},m_t^{nod} \right]$$
$$and \hspace{1cm}(m^{ske})_t^{(\mu,\Pi)} = \left[ \Pi^{ske}\ind_{[0,H^\mu_{0,t})} , m_t^{ske} \right].$$
Notice that these definitions are coherent with the previous definitions of the processes $m^{nod}$ and $m^{ske}$.\\
By using the strong Markov property for the process $(\rho,m^{nod})$ (see \cite{ad:falp}, Proposition 3.1) and Proposition \ref{markovske}, we obtain that the snake $\cs$ is a càd-làg strong Markov process. See Proposition 2.5 of \cite{adv:plcrt}.

%By contruction, we get the snake property : a.s.
%$$(\rho_t,m_t)(ds)\ind_{s<H_{t,t'}}=(\rho_{t'},m_{t'})(ds)\ind_{s<H_{t,t'}}.$$
We write $m^{(\theta)}(dr)=m^{ske}(dr,[0,\theta])+m^{nod}(dr,[0,\theta])$. Due to the properties of the Poisson point measures, we obtain the following result.

\begin{prop}
$m_t^{(\theta+\theta')}-m_t^{(\theta)}$ is independent of $m_t^{(\theta)}$ and has the same law as $m_t^{(\theta')}$.
\end{prop}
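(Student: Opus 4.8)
The plan is to read the statement conditionally on the underlying L�vy process $X$, since the whole snake has been constructed conditionally on $X$, and to deduce both assertions from the elementary independence and translation-invariance properties of Poisson point measures whose intensity factorizes with Lebesgue measure in the $\theta$-variable. The key structural observation is that, conditionally on $X$, both $m^{ske}$ and $m^{nod}$ are point measures on $\R_+^2$ (with coordinates $(r,\theta)$) whose intensities are of product form, a measure in $r$ times $d\theta$: for the skeleton the intensity is $2\beta\ind_{[0,H_t]}(r)\,dr\,d\theta$, and for the nodes each jump time $s$ with $X_{s-}<I_t^s$ contributes, in the $\theta$-direction, a Poisson process $\sum_u\delta_{V_{s,u}}$ of intensity $\Delta_s\,d\theta$, weighted by $(I_t^s-X_{s-})$ and located at $H_s$ in the $r$-direction. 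In both cases the $\theta$-marginal intensity is a constant multiple of Lebesgue measure, hence translation invariant.

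First I would treat the two components separately, recalling that $m^{ske}$ and $m^{nod}$ are independent, since the snakes $W^i$ and the node marks are built from independent Poisson measures. For the fixed exploration time $t$ I would split the $\theta$-axis into the disjoint sets $[0,\theta]$ and $(\theta,\theta+\theta']$. By the restriction theorem for Poisson point measures, the restrictions of $m^{ske}$ (resp. of $m^{nod}$) to $\R_+\times[0,\theta]$ and to $\R_+\times(\theta,\theta+\theta']$ are independent conditionally on $X$; projecting onto the $r$-axis shows that $m^{ske}_t(dr,[0,\theta])$ and $m^{ske}_t(dr,(\theta,\theta+\theta'])$ are independent, and likewise for $m^{nod}$. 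Superposing the two independent components then yields that $m^{(\theta)}_t$ and $m^{(\theta+\theta')}_t-m^{(\theta)}_t$ are independent conditionally on $X$.

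For the equality in law, I would use that Lebesgue measure is invariant under the translation by $-\theta$, which maps $(\theta,\theta+\theta']$ onto $(0,\theta']$. For the skeleton this is immediate, since $m^{ske}_t(\cdot,[0,\theta])$ is Poisson with intensity $2\beta\theta\ind_{[0,H_t]}(r)\,dr$, depending only on the length of the $\theta$-interval; for the nodes, the count $\#\{u:V_{s,u}\in(\theta,\theta+\theta']\}$ has the same Poisson law (parameter $\Delta_s\theta'$) as $\#\{u:V_{s,u}\in[0,\theta']\}$, for every $s$ independently, while the weights $(I_t^s-X_{s-})$ and positions $H_s$ are unchanged. Hence, conditionally on $X$, the increment $m^{(\theta+\theta')}_t-m^{(\theta)}_t$ has the same law as $m^{(\theta')}_t$; integrating over $X$ shows that this equality in law holds unconditionally as well.

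The only point requiring a little care, the main obstacle although a mild one, is that $m^{nod}$ is not a simple Poisson measure but a weighted superposition over the countably many jump times $s\le t$ of $X$: one must check that the restriction and stationarity arguments survive the weighting by $(I_t^s-X_{s-})$ and the summation over $s$. This is handled by noting that, conditionally on $X$, the weights and the $r$-positions $H_s$ are deterministic and identical for the two $\theta$-intervals, so that only the conditionally independent Poisson counts in the disjoint $\theta$-sets enter; the superposition of the independent Poisson families indexed by the jumps $s$ then preserves both the independence across disjoint $\theta$-intervals and the invariance in law under $\theta$-translation.
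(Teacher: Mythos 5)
Your argument is correct and is essentially the paper's own route: the paper simply asserts the proposition ``due to the properties of the Poisson point measures,'' and your restriction-to-disjoint-$\theta$-intervals plus translation-invariance argument, carried out conditionally on $X$ and handling the deterministic weights $(I_t^s-X_{s-})$ and positions $H_s$ of $m^{nod}$ exactly as needed, is the detailed version of that one-line assertion. Your care in asserting the independence only conditionally on $X$ (while the equality in law survives integration over $X$) is in fact the right reading of the statement, since unconditionally the two measures are correlated through $H_t$ and the jumps of $X$, consistent with the paper's convention that the mark processes are defined conditionally on $X$.
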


We still denote by $\P_\mu$ (resp. $\P^*_\mu$) the law of the snake $(\rho,m^{nod},m^{ske})$ starting from $(\mu,0,0)$ (resp. and killed when it reaches 0). We also denote by $\N$ the law of the snake $\cs$ when $\rho$ is distributed under $\N$.

We define $\psi^{(\theta)}$ by, for any $\theta \in \R$,
\begin{eqnarray*}
\psi^{(\theta)}(\lambda) &=& \psi(\theta+\lambda)-\psi(\theta)\\
&=& \alpha^{(\theta)} \lambda + \beta^{(\theta)} \lambda^2 + \int_{(0,+\infty)}(e^{-\lambda l}-1+\lambda l) \pi^{(\theta)}(dl)
\end{eqnarray*}
$$with\begin{cases}
\alpha^{(\theta)} =  \alpha+2\beta\theta + \int_{(0,+\infty)}(1-e^{-\theta l})l\pi(dl)\\
\beta^{(\theta)}=\beta\\
\pi^{(\theta)}(dl)=e^{-\theta l}\pi(dl).
\end{cases}$$

For fixed $\theta\geq0$ and $t\in [0,\sigma]$, we define the set $A^{(\theta)}_t$ of individuals of the Lévy CRT without marks on their lineage and its right-continuous inverse $C^{(\theta)}_t$ given by the formulas:
$$A^{(\theta)}_t=\int_0^t \ind_{m^{(\theta)}_s=0}ds \hspace{0.5cm}and \hspace{0.5cm} C_t^{(\theta)} = \inf\{  s>0 ; A_s^{(\theta)}>t  \}.$$
We define the exploration process $\rho^{(\theta)}$ which describes the sub tree under the first marks given by $m^{(\theta)}$ : $\rho^{(\theta)}_t=\rho_{C^{(\theta)}_t}$. Let $\cf^{(\theta)}=(\cf^{(\theta)}_t,t\geq0)$ be the filtration generated by pruned Lévy Poisson snake $\cs^{(\theta)} = (\rho^{(\theta)},m^{(\theta)})$ completed the usual way. We also denote $\sigma{(\theta)}=\inf \{ t>0; \rho_t^{(\theta)}=0\}$ and $X^{(\theta)}$ the Lévy process with Laplace exponent $\psi^{(\theta)}$.

We can write the key property of $\rho^{(\theta)}$ proved by Abraham, Delmas and Voisin \cite{adv:plcrt}.

\begin{prop}[Theorem 1.1 \cite{adv:plcrt}]\label{explorationtheta}
The exploration process $\rho^{(\theta)}$ is associated with a Lévy process with Laplace exponent $\psi^{(\theta)}$.
\end{prop}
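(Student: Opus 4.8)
The plan is to prove the statement through the Poisson/spinal representation of Proposition \ref{Representation formula} (together with the Bismut decomposition of Proposition \ref{Bismut}) combined with the special Markov property of the L\'evy snake. The guiding principle, which both suggests the answer and furnishes a consistency check, is the elementary identity that $\psi^{(\theta)}(\lambda)=\psi(\lambda+\theta)-\psi(\theta)$ is exactly the Laplace exponent obtained from $X$ by the Esscher tilting by $e^{-\theta X}$: it predicts in advance that the pruned tree should carry the thinned node measure $e^{-\theta\ell}\pi(d\ell)=\pi^{(\theta)}(d\ell)$, the unchanged Brownian coefficient $\beta^{(\theta)}=\beta$, and an inflated drift $\alpha^{(\theta)}$.

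First I would check that $\rho^{(\theta)}$ is a c\`ad-l\`ag strong Markov process. Since $A^{(\theta)}_t=\int_0^t\ind_{m^{(\theta)}_s=0}\,ds$ is a continuous additive functional of the strong Markov snake $\cs=(\rho,m)$ and $C^{(\theta)}$ is its right-continuous inverse, the time-changed process $\rho^{(\theta)}_t=\rho_{C^{(\theta)}_t}$ is strong Markov by the usual time-change argument; this reduces the problem to identifying the branching mechanism.

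The heart of the proof is to decompose a generic excursion of $\rho$ under $\N$ into the pruned tree $\rho^{(\theta)}$ (the individuals whose entire lineage is mark-free) and the subtrees hanging from the first mark on their lineage, and then to read off the law of $\rho^{(\theta)}$ from the spinal representation. By the special Markov property, conditionally on the pruned tree the cut-away subtrees form a Poisson point process along it, so it suffices to track how the pruning modifies the three ingredients of $\M=\int_0^\infty da\,e^{-\alpha a}\M_a$. A node of mass $\ell$ (arising from a jump of size $\ell$) survives when its marking measure, a Poisson measure of rate $\ell$ over the parameter interval $[0,\theta]$, is empty, i.e.\ with probability $e^{-\theta\ell}$; thinning the node intensity $\ell\,\pi(d\ell)$ of the measure $\cn$ by this factor turns it into $\ell\,\pi^{(\theta)}(d\ell)$ and leaves the continuous weight $\beta$ untouched, giving $\beta^{(\theta)}=\beta$. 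Along the spine, marks are produced both by the skeleton snake, at rate $2\beta\theta$ per unit length, and by the marking of the nodes the spine traverses, which form a size-biased Poisson process of intensity $\ell\,\pi(d\ell)$ per unit length and are marked with probability $1-e^{-\theta\ell}$; the spine is cut at the first such mark, so the pruned spine length has survival function $e^{-\alpha^{(\theta)}a}$ with $\alpha^{(\theta)}=\alpha+2\beta\theta+\int(1-e^{-\theta\ell})\ell\,\pi(d\ell)$, which replaces the weight $e^{-\alpha a}$ by $e^{-\alpha^{(\theta)}a}$. Matching the resulting representation against Proposition \ref{Representation formula} for the mechanism $\psi^{(\theta)}$ identifies the law of $\rho^{(\theta)}$.

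I expect the main obstacle to be the special Markov property together with the exact compensation yielding $\alpha^{(\theta)}$: one must justify rigorously that the cut subtrees form a Poisson process along the pruned tree, and that the two cutting mechanisms (the continuous skeleton cutting at rate $2\beta\theta$ and the discrete marking of nodes) combine, once the mass removed by the pruning has been compensated, into precisely the Esscher shift $\psi(\cdot+\theta)-\psi(\theta)$. The Esscher identity of the first paragraph then serves as the final check that the drift, Brownian and jump contributions have been accounted for correctly.
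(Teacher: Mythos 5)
Your parameter bookkeeping is correct and the Esscher heuristic is the right way to guess the answer: the thinning of the node intensity $\ell\,\pi(d\ell)$ by the survival probability $e^{-\theta\ell}$, the unchanged coefficient $\beta$, and the tilt $e^{-\alpha a}\mapsto e^{-\alpha^{(\theta)}a}$ with $\alpha^{(\theta)}=\alpha+2\beta\theta+\int(1-e^{-\theta\ell})\ell\,\pi(d\ell)$ all come out as they should. Note first, however, that the paper you are reading does not prove this proposition at all: it is quoted from Abraham--Delmas--Voisin \cite{adv:plcrt} (their Theorem 1.1), where it is established by combining the special Markov property with a martingale-problem characterization of exploration processes (in the $\beta=0$ precursor \cite{ad:falp} the identification is carried out on the law of the underlying process directly), not by the route you propose.

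The genuine gap is your final step, ``matching the resulting representation against Proposition \ref{Representation formula} identifies the law of $\rho^{(\theta)}$.'' Proposition \ref{Representation formula} only describes the image of the occupation measure $\N\bigl[\int_0^\sigma dt\,\ind_{\{(\rho_t,\eta_t)\in\cdot\}}\bigr]$, i.e.\ the one-dimensional marginal of the process at a uniformly sampled time. Your thinning computation shows, correctly, that the pruned process has occupation marginal $\int_0^\infty da\,e^{-\alpha^{(\theta)}a}\,\M^{(\theta)}_a$ (the analogue of $\M_a$ for $\psi^{(\theta)}$), but this is a necessary condition, not a characterization: it says nothing about the joint law along the excursion, and nothing a priori guarantees that $\rho^{(\theta)}$ is the exploration process of \emph{any} Lévy process --- which is precisely the hard content of the theorem. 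The special Markov property does not close this gap either: as stated in Theorem \ref{special}, it gives the conditional law of the cut-away subtrees \emph{given} $\cf^{(\theta)}_\infty$, i.e.\ given the pruned tree, so it constrains the reconstruction kernel and not the marginal law of $\rho^{(\theta)}$ itself. To extract that marginal along your lines you would additionally need a reconstruction-and-injectivity argument (grafting a Poisson family of $\psi$-subtrees onto a $\psi^{(\theta)}$-tree yields a $\psi$-tree, and this grafting map determines the base law uniquely), which your sketch neither states nor proves; this is exactly what the martingale-problem verification in \cite{adv:plcrt} replaces. Your Bismut computation remains valuable as a consistency check on the parameters, but it does not constitute a proof of the proposition.
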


The next Lemma is also crucial for getting a fragmentation process and
explains the choice of the parameters of the pruning. It has been proved by Abraham and Delmas \cite{ad:falp}, see the comments under their Lemma 1.6. Notice that the proof of Abraham and Delmas is established in the general case when the quadratic coefficient $\beta$ is nonnegative.

\begin{lem}\label{lem:excursions}
For $\pi_*(dr)$ a.e. $r$, the ``law" of $\rho^{(\theta)}$ under $\N$, conditionally on
$\sigma^{(\theta)}=r$ is the same as the "law" of $\rho$ under $\N$,
conditionally on $\sigma=r$. 
\end{lem}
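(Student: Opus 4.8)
The plan is to reduce the statement to an absolute-continuity relation between two excursion measures and then disintegrate with respect to the length. By Proposition \ref{explorationtheta} the law of the pruned process $\rho^{(\theta)}$ under $\N$ is $\N^{(\theta)}$, the excursion measure of the exploration process attached to the L\'evy process $X^{(\theta)}$ with exponent $\psi^{(\theta)}$ (built by the very same formula as $\rho$, since $\beta^{(\theta)}=\beta$). Writing its canonical disintegration $\N^{(\theta)}=\int_{(0,\infty)}\pi_*^{(\theta)}(dr)\,\N_r^{(\theta)}$ with $\N_r^{(\theta)}[\sigma=r]=1$, the conditional law of $\rho^{(\theta)}$ given $\sigma^{(\theta)}=r$ is exactly $\N_r^{(\theta)}$. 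Hence it suffices to prove that $\N_r^{(\theta)}=\N_r$ for $\pi_*$-a.e.\ $r$.

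The key input is the Esscher relation $\N^{(\theta)}(d\ce)=e^{-\psi(\theta)\sigma}\,\N(d\ce)$. Indeed, since $\psi^{(\theta)}(\lambda)=\psi(\lambda+\theta)-\psi(\theta)$, the process $X^{(\theta)}$ is the Esscher transform of $X$, with $d\P^{(\theta)}/d\P\big|_{\cf_t}=e^{-\theta X_t-t\psi(\theta)}$ on the natural filtration of $X$; as $\rho$ is built by the same functional of the path in both cases, this change of measure transfers to the exploration processes. Reading it on a complete excursion and using that $X$ returns to its starting value at the end of an excursion ($X_\sigma=0$, because $X$ has no negative jumps and is of infinite variation), the density collapses to $e^{-\psi(\theta)\sigma}$, giving the claimed relation between the intrinsic excursion measures. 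I would pin down the absence of any spurious normalising constant by checking the length transform: from $\N[1-e^{-\lambda\sigma}]=\psi^{-1}(\lambda)$ one gets
$$\N\!\left[e^{-\psi(\theta)\sigma}\bigl(1-e^{-\lambda\sigma}\bigr)\right]=\psi^{-1}\!\bigl(\lambda+\psi(\theta)\bigr)-\theta=(\psi^{(\theta)})^{-1}(\lambda)=\N^{(\theta)}[1-e^{-\lambda\sigma}],$$
which matches exactly.

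It then remains to disintegrate. The same computation, applied to $\lambda\mapsto(\psi^{(\theta)})^{-1}(\lambda)=\int_{(0,\infty)}e^{-\psi(\theta)l}(1-e^{-\lambda l})\pi_*(dl)$, identifies the length measure of $\N^{(\theta)}$ as $\pi_*^{(\theta)}(dl)=e^{-\psi(\theta)l}\pi_*(dl)$. Testing the Esscher relation against functionals of the form $h(\sigma)F$ gives, on the one hand, $\int\pi_*^{(\theta)}(dr)\,h(r)\,\N_r^{(\theta)}[F]$, and on the other hand $\N[e^{-\psi(\theta)\sigma}h(\sigma)F]=\int\pi_*(dr)\,e^{-\psi(\theta)r}h(r)\,\N_r[F]=\int\pi_*^{(\theta)}(dr)\,h(r)\,\N_r[F]$. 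By uniqueness of the disintegration this forces $\N_r^{(\theta)}=\N_r$ for $\pi_*^{(\theta)}$-a.e.\ $r$; since $\pi_*^{(\theta)}$ and $\pi_*$ are mutually equivalent (the density $e^{-\psi(\theta)l}$ is everywhere positive), the equality holds for $\pi_*$-a.e.\ $r$, which combined with the first paragraph proves the lemma.

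The main obstacle is the rigorous passage of the Esscher transform from the finite-horizon filtration $\cf_t$ to the (infinite) excursion measure $\N$, i.e.\ justifying that the It\^o excursion measure associated with the local time $-I$ transforms precisely as $\N^{(\theta)}=e^{-\psi(\theta)\sigma}\N$ with no extra multiplicative constant coming from a change of local-time normalisation; the length-transform identity above is exactly what certifies this. The remaining ingredients --- the formula for $\pi_*^{(\theta)}$, the disintegration, and the equivalence of the two length measures --- are routine.
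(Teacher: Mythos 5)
Your proof is correct and is essentially the argument the paper itself relies on: the paper gives no proof of its own but defers to the comments under Lemma 1.6 of \cite{ad:falp}, whose content is exactly your chain --- identify the law of $\rho^{(\theta)}$ under $\N$ with $\N^{(\theta)}$ via Proposition \ref{explorationtheta} (using $\beta^{(\theta)}=\beta$ so the exploration functional is unchanged), establish the Esscher/Girsanov relation $\N^{(\theta)}(d\ce)=e^{-\psi(\theta)\sigma}\,\N(d\ce)$, and disintegrate along $\sigma$, the density depending on the excursion only through its length. Your Laplace-transform check pinning the normalising constant is sound, though in fact more than is needed: any proportionality $\N^{(\theta)}=c\,e^{-\psi(\theta)\sigma}\N$ with $f(\sigma)$-type density already yields $\N_r^{(\theta)}=\N_r$ for $\pi_*$-a.e.\ $r$, since constants and length-only densities drop out of the conditional laws.
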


\subsection{Poisson representation of the snake}\label{represpoisson}
We decompose the process $\rho$ under $\P_{\mu}^*$ according to excursions of the total mass of $\rho$ above its past minimum. More precisely, let $(\alpha_i,\beta_i),i\in J$ be the excursion intervals of $X-I$ above 0 under $\P^*_{\mu}$. For $i\in J$, we define $h_i=H_{\alpha_i}$ and $\rho^i$ by the formula : for $t\geq0$ and $f\in \cb_+(\R_+)$,
$$\left< \rho_t^i,f  \right> = \int_{(h_i,+\infty)} f(x-h_i) \rho_{(\alpha_i + t) \wedge \beta_i}(dx).$$
We write $\sigma^i = \inf \{ s>0; \left< \rho^i_s ,1 \right> =0 \}$.\\
We also define the mark process $m$ above the intervals $(\alpha_i,\beta_i)$. For every $t\geq 0$ and $f\in \cb_+(\R_+^2)$, we set 
$$\left< m_t^{i,a} , f \right> = \int_{(h_i,+\infty)} f(x-h_i,\theta) m^a_{(\alpha_i + t) \wedge \beta_i}(dx,\theta)$$
with $a=ske,nod$. We set for all $i\in J$,
$m^i=(m^{i,nod},m^{i,ske})$. It is easy to adapt the proof of Lemma
4.2.4 of \cite{dlg:rtlpsbp} to get the following Poisson representation.

\begin{lem}\label{decomp poisson}
Let $\mu\in \cm_f(\R_+) $. The point measure $\displaystyle \sum_{i\in J}\delta_{(h_i,\cs^i)}$ is under $\P^*_{\mu}$ a Poisson point measure with intensity $\mu(dr)\N(d\cs)$.
\end{lem}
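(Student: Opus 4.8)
The plan is to adapt the proof of Lemma~4.2.4 of \cite{dlg:rtlpsbp}: I would first establish the statement for the exploration process $\rho$ alone and then adjoin the marks by a marking argument for Poisson point measures. For the exploration process, recall that under $\P^*_\mu$ one has $\rho^\mu_t=[k_{-I_t}\mu,\rho_t]$, so that $\langle\rho^\mu_t,1\rangle=\langle\mu,1\rangle+X_t$ and the process is killed exactly when $-I$ reaches $\langle\mu,1\rangle$, that is, when $\mu$ has been entirely erased. On each excursion interval $(\alpha_i,\beta_i)$ of $X-I$ above $0$ the quantity $-I$ is frozen at some value $a_i$, so that $k_{-I_t}\mu\equiv k_{a_i}\mu$ there and $h_i=H_{\alpha_i}=H(k_{a_i}\mu)$. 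By the excursion theory of $X-I$, with $-I$ as local time at $0$, the point measure $\sum_{i\in J}\delta_{(a_i,\rho^i)}$ is a Poisson point measure with intensity $\ind_{[0,\langle\mu,1\rangle]}(a)\,da\,\N(d\rho)$. It then suffices to push the first coordinate forward through the map $a\mapsto H(k_a\mu)$: since $k_a\mu([0,r])=\mu([0,r])\wedge(\langle\mu,1\rangle-a)$, the image of $\ind_{[0,\langle\mu,1\rangle]}(a)\,da$ under this map is exactly $\mu(dr)$, which gives the known fact that $\sum_{i\in J}\delta_{(h_i,\rho^i)}$ is a Poisson point measure with intensity $\mu(dr)\,\N(d\rho)$.

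For the marks, I would argue that $m^i=(m^{i,nod},m^{i,ske})$ is, conditionally on $\rho^i$, distributed according to the $\N$-law of the marks, and that the family $(m^i)_{i\in J}$ is independent given $(\rho^i)_{i\in J}$. Indeed, the node marks are obtained by attaching to each jump of $X$ an independent Poisson measure with intensity $\Delta_s\ind_{u>0}\,du$, so the node marks carried during excursion $i$ depend only on the jumps occurring in $(\alpha_i,\beta_i)$, hence on $\rho^i$, together with marking variables independent across $i$. The skeleton marks are governed, conditionally on $X$, by the Poisson measures $W^i$ of intensity $2\beta\ind_{[0,H_s]}(r)\,dr\,d\theta$ subject to the coherence property that $W^i_s$ and $W^i_{s'}$ agree below the common ancestor height $H_{s,s'}$; their restriction to heights above $h_i$ during excursion $i$ is, conditionally on $\rho^i$, a Poisson measure with the $\N$-intensity, and is independent of the analogous objects on the other excursions, because distinct excursions explore portions of the skeleton lying strictly above their common base at height $h_i$. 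Granting these two points, the marking theorem for Poisson point measures upgrades the Poisson structure of $\sum_{i\in J}\delta_{(h_i,\rho^i)}$ to that of $\sum_{i\in J}\delta_{(h_i,\cs^i)}$, with intensity $\mu(dr)\,\N(d\cs)$, where $\N(d\cs)$ now denotes the law of the full marked snake under $\N$.

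The hard part will be the rigorous verification, in the second step, of the conditional independence of the marks across excursions and of the identification of the conditional law of $m^i$ given $\rho^i$ with the $\N$-law. This amounts to showing that cutting the snake at the base height $h_i$ decouples the mark configuration above $h_i$ from everything carried on $[0,h_i]$ and from the other sub-snakes, which is precisely the branching property of the mark process encoded in the strong Markov property of $\cs$ (Proposition~2.5 of \cite{adv:plcrt}) together with the independence built into the Poisson constructions of $m^{nod}$ and $m^{ske}$. Once these statements are established, the marking argument is routine and the conclusion follows as in \cite{dlg:rtlpsbp}.
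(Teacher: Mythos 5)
Your proposal is correct and takes essentially the same approach as the paper: the paper offers no proof of this lemma beyond the remark that it is easy to adapt Lemma 4.2.4 of \cite{dlg:rtlpsbp}, and your two-step argument --- excursion theory of $X-I$ with local time $-I$, the pushforward of $\ind_{[0,\langle\mu,1\rangle]}(a)\,da$ under $a\mapsto H(k_a\mu)$ onto $\mu(dr)$, and then the marking theorem using the conditional independence of the node and skeleton marks across excursions --- is precisely that adaptation. The conditional-independence step you flag as the hard part is indeed the substance of the Duquesne--Le Gall snake argument, and the tools you name (the snake coherence property and the strong Markov property of $\cs$ from \cite{adv:plcrt}) are the right ones to carry it out.
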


\subsection{Special Markov property}

We fix $\theta\geq0$. We define $O^{(\theta)}$ as the interior of the set
$$\{ s\ge 0,\ m_s^{(\theta)}\ne 0\}.$$
We write $O^{(\theta)}=\bigcup_{i \in \tilde I}(a_i,b_i)$ and we say
that $(a_i,b_i)$ are the excursions intervals of the Lévy marked snake
$\cs^{(\theta)}=(\rho^{(\theta)},m^{(\theta)})$ away from $\{s\geq0 ;
m_s^{(\theta)}=0  \}$. We set $h_i=H_{a_i}$ and we define the process
$\cs^{(\theta),i}=(\rho^{(\theta),i},m^{(\theta),i})$ above the
excursion intervals $((a_i,b_i),i\in \tilde I)$ as previously.

If $Q$ is a measure on $\S$ and $\varphi$ is a non-negative measurable function defined on the measurable space $\R_+\times\Omega\times \S$, we denote by
$$Q[\varphi(u,\omega,\cdot)] = \int_{\S}\varphi(u,\omega,\cs)Q(d\cs).$$

We now recall the special Markov property proved by Abraham, Delmas and Voisin \cite{adv:plcrt}. It gives the distribution of the Lévy snake "above" the "first" marks of the marked CRT knowing the part of the pruned CRT where the root belongs to.

\begin{theo}[\cite{adv:plcrt}, Theorem 4.2 ](Special Markov property)\label{special}\\
We fix $\theta>0$. Let $\phi$ be a non-negative measurable function defined on $\R_+\times \S$ such that $t\mapsto \phi(t,\omega,\cs)$ is progressively $\cf^{(\theta)}_\infty$-measurable for any $\cs\in \S$. Then, we have $\P$-a.s.

\begin{multline}
   \label{eq:MS}
\E\left[\exp\left(-\sum_{i\in\tilde I}\varphi(A^{(\theta)}_{a_i},\omega,\cs^{(\theta),i})\right)\biggm| \cf^{(\theta)}_\infty \right]\\ 
=\exp\left(-\int_0^\infty     du\,2\beta\theta \N\left[1-\expp{-\varphi(u,\omega,\cdot)}\right]   \right) \\
\exp\left(-\int_0^\infty     du\,\int_{(0,\infty )}(1-e^{-\theta\ell}) \pi(d\ell) \left(1-   \E^*_\ell[\expp{-\varphi(u,\omega,\cdot)} ] \right) \right).  
\end{multline}

Furthermore, the law of the excursion process $\displaystyle
\sum_{i\in \tilde I}\delta_{ ( A^{(\theta)}_{a_i},\rho^{(\theta)}_{a_i-} ,\cs^{(\theta),i}) }$, given $\cf^{(\theta)}_\infty$, is the law of a Poisson point measure with intensity $\displaystyle \ind_{u\geq0} du \, \delta_{\rho_u^{(\theta)}}(d\mu) \left( 2\beta\theta \N(d\cs)+\int_{(0,\infty)} (1-e^{-\theta\ell}) \pi(d\ell)\P_\ell^*(d\cs) \right)$.

\end{theo}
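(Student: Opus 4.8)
The plan is to establish the two claims simultaneously. The displayed identity \reff{eq:MS} is exactly the Laplace functional of a Poisson point measure with the announced intensity, so once \reff{eq:MS} is proved for every admissible $\varphi$ the description of the excursion process $\sum_{i\in\tilde I}\delta_{(A^{(\theta)}_{a_i},\rho^{(\theta)}_{a_i-},\cs^{(\theta),i})}$ follows from the standard characterization of Poisson measures by their Laplace transforms. I would therefore focus on \reff{eq:MS}, following the exit-measure (Dynkin--Kuznetsov) paradigm for the special Markov property.

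First I would fix the excursion decomposition. The additive functional $A^{(\theta)}_t=\int_0^t\ind_{m^{(\theta)}_s=0}\,ds$ and its inverse $C^{(\theta)}$ produce the pruned exploration $\rho^{(\theta)}_t=\rho_{C^{(\theta)}_t}$, which is $\cf^{(\theta)}_\infty$-measurable, while $O^{(\theta)}=\bigcup_{i\in\tilde I}(a_i,b_i)$ collects the cut snakes $\cs^{(\theta),i}$. Since $\cs=(\rho,m^{nod},m^{ske})$ is a strong Markov process (Proposition \ref{markovske} for the skeleton part together with the strong Markov property of $(\rho,m^{nod})$), I may treat the $(a_i,b_i)$ as the excursions of $\cs$ away from $\{s\ge0;\ m^{(\theta)}_s=0\}$ and aim to show that, conditionally on $\cf^{(\theta)}_\infty$, these excursions form an independent Poisson family.

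Next I would identify the intensity, which splits according to the two kinds of first mark encountered along a lineage of the unmarked tree. A skeleton mark carries a fresh subtree: conditionally on the height process $m^{ske}$ is a Poisson measure with intensity $2\beta\,\ind_{[0,H_t]}(r)\,dr\,d\theta$, and the part of the tree hanging above the mark is, by Lemma \ref{decomp poisson}, a generic excursion of the snake; after the time change by $A^{(\theta)}$ such marks therefore contribute excursions distributed as $\N(d\cs)$ arriving at rate $2\beta\theta\,du$, the factor $\theta$ coming from restricting the marks to parameter values in $[0,\theta]$. A node mark sits on a jump of $X$ of size $\ell$; such a node is marked before parameter $\theta$ with probability $1-\expp{-\theta\ell}$, and once marked the grafted subtree is distributed as the snake under $\P^*_\ell$, again by Lemma \ref{decomp poisson}. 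Integrating over the jumps yields the node contribution $\int_{(0,\infty)}(1-\expp{-\theta\ell})\pi(d\ell)\,\P^*_\ell(d\cs)$. I would cross-check the whole intensity against Proposition \ref{explorationtheta}: since $\rho^{(\theta)}$ is associated with $\psi^{(\theta)}$, whose parameters satisfy $\pi^{(\theta)}(d\ell)=\expp{-\theta\ell}\pi(d\ell)$ and $\alpha^{(\theta)}=\alpha+2\beta\theta+\int(1-\expp{-\theta\ell})\ell\pi(d\ell)$, the excised part of the tree must carry precisely the node intensity $(1-\expp{-\theta\ell})\pi(d\ell)$ and skeleton rate $2\beta\theta$, which is exactly what the two terms above give.

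Finally I would assemble \reff{eq:MS}: applying the exponential formula for Poisson point measures to the conditionally Poisson family, with the intensity just described, turns the conditional expectation of $\exp(-\sum_{i\in\tilde I}\varphi(A^{(\theta)}_{a_i},\omega,\cs^{(\theta),i}))$ into the product of the two exponentials on the right-hand side, with $\N[1-\expp{-\varphi}]$ produced by the skeleton term and $1-\E^*_\ell[\expp{-\varphi}]$ by the node term. The main obstacle is the rigorous proof that the cut excursions are conditionally independent and Poisson given $\cf^{(\theta)}_\infty$: unlike the unconditional statement of Lemma \ref{decomp poisson}, one now conditions on the entire unmarked snake, so the argument must run through the successive exit times of $\cs$ from $\{m^{(\theta)}=0\}$, apply the strong Markov property at each, and control the accumulation of possibly infinitely many small excursions. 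I expect this to be handled most safely by a compensator argument, showing that $\exp(-\sum_{a_i<C^{(\theta)}_t}\varphi)\,\exp(\int_0^{A^{(\theta)}_t}du\,[2\beta\theta\,\N(1-\expp{-\varphi})+\int(1-\expp{-\theta\ell})\pi(d\ell)(1-\E^*_\ell[\expp{-\varphi}])])$ is a martingale and then letting $t\to\infty$.
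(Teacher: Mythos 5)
First, a point of reference: this paper does not prove Theorem \ref{special} at all --- it is quoted verbatim from \cite{adv:plcrt} (Theorem 4.2), so there is no internal proof to compare yours against; within this paper the citation \emph{is} the argument. Judged on its own terms, your outline gets the easy parts right: the reduction of the second assertion to \reff{eq:MS} is legitimate (since $\rho^{(\theta)}_u$ is $\cf^{(\theta)}_\infty$-measurable, a progressively measurable $\varphi(u,\omega,\cdot)$ can encode dependence on the mark location $\rho^{(\theta)}_{a_i-}$, and Laplace functionals characterize Poisson point measures), and your identification of the intensity --- skeleton marks at rate $2\beta\theta\,du$ carrying $\N$-distributed excursions, node marks on jumps of size $\ell$ retained with probability $1-e^{-\theta\ell}$ carrying $\P^*_\ell$-distributed snakes, cross-checked against the parameters of $\psi^{(\theta)}$ from Proposition \ref{explorationtheta} --- matches the statement.

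The genuine gap is that the entire difficulty of the theorem is concentrated in the step you defer to ``a compensator argument,'' and as sketched that step does not work. The conditioning is on $\cf^{(\theta)}_\infty$, the full $\sigma$-field of the time-changed pruned snake \emph{including its future}; applying the strong Markov property of $\cs$ at successive entrance times into $\{m^{(\theta)}\neq 0\}$ yields independence of each excursion from the past of $\cs$, which is strictly weaker than conditional independence given the whole pruned trajectory. Your proposed martingale already presupposes the conditional intensity it is supposed to identify, so the argument is circular as stated; in addition the excursion intervals $(a_i,b_i)$ accumulate (there are countably many small excursions), the $b_i$ are not stopping times one can iterate through in order, and Lemma \ref{decomp poisson} describes the Poisson decomposition of $\rho$ under $\P^*_\mu$ above its running minimum, not the law of the subtree above a skeleton or node mark, so invoking it for the intensity requires supplementary branching-property arguments. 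The actual proof in \cite{adv:plcrt} is a substantial one, adapting Duquesne and Le Gall's proof of the special Markov property (\cite{dlg:rtlpsbp}, Section 4.2): one first establishes the formula for elementary functionals via a discretization/approximation of the excursion process, uses the Markov property of the \emph{marked} snake together with the explicit Poisson structure of the marks conditionally on $\rho$, and concludes by monotone-class and limiting arguments. So your proposal is a correct description of what is true and why it is plausible, but not a proof; by this paper's own standard, the honest ``proof'' is the citation.
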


\section{Links between the snake and the fragmentation}
\subsection{Construction of the fragmentation process}\label{relation}

%------------------------version courte------------------------
We are interested in the fragments of the tree given by the marks process. We do the same construction as in \cite{ad:falp}, Section 4.1.

For fixed $\theta\geq0$, we first construct an equivalence relation,$\mathcal R_\theta$, on $[0,\sigma]$ under $\N$ or under $\N_\sigma$ by :
$$ s \mathcal R_\theta t \Leftrightarrow m_s^{(\theta)}([H_{s,t},H_s])=m_t^{(\theta)}([H_{s,t},H_t])=0 .$$
Two individuals, $s$ and $t$, belong to the same equivalence class if they belong to the same fragment, that is to say if there is no mark on their lineage down to their most recent  common ancestor. From the equivalence relation $\mathcal R_\theta$, we get the family of sets $G^j$ of individuals with $j$ marks in their lineage. 

As we put marks on infinite nodes of the CRT, for $\theta>0$, for fixed $j\in \N$, the set $G^j$ can be written as an infinite union of sub-intervals of $[0,\sigma]$. We get
$$G^j=\bigcup_{k\in J_j}R^{j,k}$$
such that $R^{j,k}$ has positive Lebesgue measure. For $j\in \N$ and $k\in J_j$, we set
$$A_t^{j,k}=\int_0^t \ind_{s\in R^{j,k}} ds \hspace{0.3cm} \hbox{and} \hspace{0.3cm} C^{k,j}_t=\inf\{   u\geq 0; A_u^{j,k}>t  \} ,$$
with the convention $\inf \emptyset = \sigma$. We also construct the process $\tilde{\cs}^{j,k}=(\tilde{\rho}^{j,k},\tilde{m}^{j,k})$ by : for every $f\in \cb_+(\R_+)$,
 $\varphi\in \cb_+(\R_+,\R_+)$ and $t\geq0$,
$$\left<  \tilde{\rho}_t^{j,k},f   \right> = \int_{(H_{C_0^{j,k}},+\infty )} f(x-H_{C_0^{j,k}}) \rho_{C_t^{j,k}}(dx)$$
$$\left<  \tilde{m}_t^{j,k},\varphi   \right>  = \int_{(H_{C_0^{j,k}},+\infty )\times (\theta,+\infty)} \varphi(x-H_{C_0^{j,k}},v-\theta) m_{C_t^{j,k}}(dx,dv)$$
$\tilde{\sigma}^{j,k}$ corresponds to the Lebesgue measure of $R^{j,k}$.

We denote $\cl^{(\theta)}=(\tilde \rho^{j,k}; j\in \N, k\in J_j)=( \rho^i; i\in I^{(\theta)})$. We also define $\cl^{(\theta-)}=(\rho^i; i\in I^{(\theta -)})$ the set defined similarly but using the equivalence relation $\mathcal R_{\theta-}$ which gives the fragments just before time $\theta$.

We now define the process $\Lambda^\theta = (\Lambda^\theta_1,\Lambda^\theta_2,\dots )$ as the sequence of non trivial Lebesgue measure of the equivalence classes of $\mathcal R_\theta$, $(\tilde{\sigma}^{j,k}, j\in \N, k\in J_j)$, ranked in decreasing order. Notice that, when $\theta>0$, this sequence is infinite. When $\theta=0$, $\Lambda^0$ is the entire tree and we denote $\Lambda^0=(\Lambda^0,0,\dots)$. Then we have that $\N$-a.s. and $\N_\sigma$-a.e.
$$\Lambda^\theta \in \cs^{\downarrow}.$$
We write $P_{\sigma}$ the law of $(\Lambda^\theta,\theta\geq0)$ under $\N_\sigma$ and by convention $P_\mathbf{0}$ is the Dirac mass at $(0,0,\dots)\in \cs^{\downarrow}$.

\begin{theo}\label{theofragmentation}
For $\pi_*(dr)$-almost every $r$, under $P_r$, $(\Lambda^\theta,\theta\geq 0)$ is a $\cs^{\downarrow}$-valued fragmentation process.
\end{theo}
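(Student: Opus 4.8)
The plan is to verify the three defining features of a fragmentation process for the process $(\Lambda^\theta,\theta\ge0)$ constructed in Section~\ref{relation}: that $\theta\mapsto\sum(\Lambda^\theta)$ is non-increasing, that it satisfies the fragmentation (branching) property, and that it is Markov. Throughout I would work under $\N$ and disintegrate with respect to $\sigma=r$, so that statements under $P_r$ become statements under $\N_r$; the $\pi_*$-null set of exceptional radii is exactly the one on which Lemma~\ref{lem:excursions} may fail, which is why the conclusion can only be claimed for $\pi_*(dr)$-almost every $r$. The proof parallels \cite{ad:falp}, Section~4, the genuinely new point being the presence of the skeleton marks (the $2\beta\theta$ term), which must be treated on the same footing as the node marks.

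Monotonicity is the easy part. Since $m^{(\theta)}(dr)=m^{ske}(dr,[0,\theta])+m^{nod}(dr,[0,\theta])$ is non-decreasing in $\theta$, the equivalence relation $\mathcal{R}_\theta$ on $[0,\sigma]$ only refines as $\theta$ grows: an extra mark on a lineage can split an equivalence class but never merge two. Hence each class at time $\theta+\theta'$ is contained in a class at time $\theta$, the classes being pairwise disjoint, so their total Lebesgue measure can only decrease and $\theta\mapsto\sum(\Lambda^\theta)=\sum_{j,k}\tilde\sigma^{j,k}$ is non-increasing.

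The core is the combined Markov and fragmentation property, for which I would establish the following branching identity: conditionally on the family of fragments $\cl^{(\theta)}=(\tilde{\cs}^{j,k})$ present at time $\theta$, the refined fragments present at time $\theta+\theta'$ are obtained by fragmenting each $\tilde{\cs}^{j,k}$ independently, and a fragment of Lebesgue size $r_i$ is fragmented according to $P_{r_i}$ at parameter $\theta'$. Three ingredients feed into this. First, the independence-of-increments property of the mark process: $m_t^{(\theta+\theta')}-m_t^{(\theta)}$ is independent of $m_t^{(\theta)}$ with the law of $m_t^{(\theta')}$, so the marks responsible for the further splitting between $\theta$ and $\theta+\theta'$ are independent of everything measurable before $\theta$ and are distributed as level-$\theta'$ marks. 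Second, Lemma~\ref{lem:excursions} together with Proposition~\ref{explorationtheta} identifies the law of the root fragment $\rho^{(\theta)}$, conditionally on $\sigma^{(\theta)}=r$, with that of $\rho$ under $\N_r$; this is what turns each fragment into a faithful copy of the original L\'evy tree once its size is fixed. Third, the special Markov property (Theorem~\ref{special}) describes the non-root fragments --- the sub-snakes hanging above the first level-$\theta$ marks --- as a Poisson point measure of independent L\'evy snakes governed by $2\beta\theta\,\N(d\cs)+\int_{(0,\infty)}(1-e^{-\theta\ell})\pi(d\ell)\,\P^*_\ell(d\cs)$. Assembling these, each fragment at time $\theta$ carries, conditionally on its size, a fresh independent copy of the full mark structure, so its continued fragmentation is an independent copy of the fragmentation started from its own mass; taking the non-increasing rearrangement of the sizes of all refined fragments yields $\Lambda^{\theta+\theta'}$ and establishes both the branching property and, as a by-product, the simple Markov property.

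The main obstacle is the third ingredient and its meshing with the second: one must show that \emph{all} the fragments at time $\theta$, and not merely the one containing the root, are conditionally independent and have the correct conditional law $\N_r$ given their sizes, while simultaneously carrying independent copies of the level-$\theta'$ marks. Lemma~\ref{lem:excursions} is tailored to $\rho^{(\theta)}$, so the work is to propagate it to the hanging fragments through the Poisson description of Theorem~\ref{special}, and to verify that the restriction of $m^{(\theta+\theta')}-m^{(\theta)}$ to a given fragment is, conditionally on that fragment, distributed exactly as the level-$\theta'$ marks of an independent tree of the same size --- treating skeleton marks (intensity $2\beta\theta$) and node marks (the $\pi$-term) uniformly. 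This is where the special Markov property does the real work; the remainder is bookkeeping with the Poisson representations of Lemma~\ref{decomp poisson} and Proposition~\ref{Representation formula}.
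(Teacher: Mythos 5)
Your proposal is correct and takes essentially the same route as the paper's own (sketched) proof: both identify the law of the root fragment via Proposition \ref{explorationtheta} and Lemma \ref{lem:excursions}, handle the hanging fragments through the special Markov property (Theorem \ref{special}) together with the Poisson representation of $\P^*_\ell$ (Lemma \ref{decomp poisson}), and propagate by induction on the number of marks to get that, conditionally on their sizes, all fragments are independent snakes with laws $\N_{\tilde\sigma^{j,k}}$ (Lemma \ref{sjk}), the restart at parameter $\theta'$ then following from the independence of mark increments. Your attribution of the $\pi_*$-null exceptional set to Lemma \ref{lem:excursions} also matches the paper.
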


\begin{proof}[Sketch of proof] As the proof is exactly the same
  as for \cite{ad:falp}, Theorem 1.1., we only give the main ideas of
  the proof and refer to \cite{ad:falp} for precise details.

{\it 1st step.} Thanks to Proposition \ref{explorationtheta}, the
first sub-excursion $\tilde \rho^{0,0}$ is ``distributed'' under $\N$
as $\rho^{(\theta)}$. Moreover, by Lemma \ref{lem:excursions} and the
construction of the Poisson snake conditionally given $\rho$, the law
of $\tilde \cs^{0,0}$ conditionally on $\{\tilde\sigma^{0,0}=s\}$ is
$\N_{s}$.

{\it 2nd step.} By the special Markov property, Theorem \ref{special},
using the same notations as in this theorem, we have that,
under $\N$ conditionally on $\tilde \sigma^{0,0}=s$, the processes
$(\cs^{(\theta),i},i\in\tilde I)$ are given by a Poisson measure with
intensity
$$s2\beta\theta\N(d\cs)+s\int_{(0,+\infty)}(1-e^{-\theta\ell})\pi(d\ell)\P_\ell^*(d\cs)$$
and are independent of $\tilde\cs^{0,0}$.

Moreover, by the Poisson representation of the probability measure
$\P_\ell^*$ (Lemma \ref{decomp poisson}), we get that the 
excursions of the snake $\cs$ ``above the first mark'', that we denote
$(\cs^{1,k},k\in J_1)$, form under $\N$ an i.i.d. family of processes
with ``distribution'' $\N$.

{\it 3rd step} By induction on the number of marks, we get the
following lemma

\begin{lem}\label{sjk}
Under $\N$, the law of the family $(\tilde{\cs}^{j,k}, j\in \N, k\in
J_j)$, conditionally on $(\tilde{\sigma}^{j,k}, j\in \N, k\in J_j)$, is
the law of independent Lévy Poisson snakes distributed respectively as $\N_{\tilde{\sigma}^{j,k}}$.
\end{lem}

The theorem then follows easily.
\end{proof}

\subsection{Another representation of the fragmentation}\label{another}
We give an another representation of the fragmentation by using a Poisson point measure under the epigraph of the height process. Recall that for every $t \in[0,\sigma]$, $$\kappa_t(dr) =2\beta\ind_{[0,H_t]}(r)(dr)+\sum_{\stackrel{0<s\le t}{X_{s-}<I_t^s}}(X_{s}-X_{s-})\delta_{H_s}(dr).$$

%We cut the marked nodes of the CRT, that is to say the set of points $(s,a)$ such that $\kappa_s(\{a\})>0$ and we cut on the skeleton of the CRT, that is to say the set of points $(s,a)$ with $0\leq a \leq H_s$.

Conditionally on the process $H$ (or equivalently on $\rho$), we set a Poisson point process $\cq(d\theta,ds,da)$ under the epigraph of $H$ with intensity $d\theta \; q_\rho(ds,da)$ where 
\begin{eqnarray*}
q_\rho(ds,da)&=& \frac{ds~\kappa_s(da)}{d_{s,a}-g_{s,a}}\\
&=&q_\rho^{ske}(ds,da)+q_\rho^{nod}(ds,da)\\
\end{eqnarray*}
$$\hbox{ with }\begin{cases}
q_\rho^{nod}(ds,da) = \displaystyle\frac{ds}{d_{s,a}-g_{s,a}}\sum_{\stackrel{0<u\le s}  {X_{u-}<I_s^u}}(X_{u}-X_{u-})\delta_{H_u}(da)\\
q_\rho^{ske}(ds,da) = \displaystyle\frac{2\beta \; ds \; \ind_{[0,H_s]}(a)da}{d_{s,a}-g_{s,a}}
\end{cases}$$
with $d_{s,a}=\sup\{ u\geq s , \min\{ H_v, v\in[s,u]\} \geq a  \}$ and $g_{s,a}=\inf\{ u\leq s , \min\{ H_v, v\in[s,u]\}\geq a  \}$. $[g_{s,a},d_{s,a}]$ is the set of individuals of the CRT with common ancestor $s$ after generation $a$.\\

\begin{prop}
Conditionally on the process $H$, the mark process $m$ and the Poisson point process $\cq$ have same distribution

\end{prop}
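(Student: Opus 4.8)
The plan is to check that, conditionally on $H$, the two objects encode the \emph{same} Poisson configuration of marks carried directly by the tree $\ct$, a mark being a pair (point of the skeleton of $\ct$, parameter $\theta\in\R_+$). This reduction is legitimate because the snake $m_t$ merely reads off the marks lying on the lineage $[0,H_t]$ of the individual $t$, each marked node $H_s$ being weighted by its $\rho_t$-mass $\rho_t(\{H_s\})=I_t^s-X_{s-}$; since those weights are deterministic functions of $\rho$, it suffices to identify in law the underlying mark configuration on $\ct$ produced by $m$ with the one produced by $\cq$. So first I would reformulate both as point processes on $\ct$. From the construction of the $W^i$ and the compatibility property, conditionally on $H$ the skeleton marks of $m$ form on each lineage a Poisson measure of intensity $2\beta\,dr\,d\theta$ which, by consistency, patches into a single Poisson point process on the skeleton with intensity $2\beta\,\ell(dp)\,d\theta$, where $\ell$ denotes the length measure on $\ct$; and $m^{nod}$ places at each node $u$ (a jump time of $X$, of height $H_u$) an independent Poisson process in $\theta$ of rate $\Delta_u=X_u-X_{u-}$, independently of the skeleton marks.

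The central step is to push the intensity $q_\rho$ forward to the tree. Let $\Phi(s,a)$ denote the ancestor of $s$ at height $a\le H_s$, a point of $\ct$. For non-negative measurable $g$ on $\ct$ and a.e. $a\ge0$ I would establish
\[
\int_0^\sigma \frac{ds}{d_{s,a}-g_{s,a}}\, g\bigl(\Phi(s,a)\bigr)\,\ind_{a\le H_s}=\sum_{p\,:\,\mathrm{ht}(p)=a} g(p),
\]
using that $\{s:\ H_s\ge a\}$ splits into the excursion intervals of $H$ above level $a$, each such interval being exactly some $[g_{s,a},d_{s,a}]$, corresponding to a single point $p$ of $\ct$ at height $a$ and carrying the constant weight $1/(d_{s,a}-g_{s,a})$, so that its contribution equals $1$. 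Integrating in $a$ gives $\Phi_*\bigl(\tfrac{ds\,da}{d_{s,a}-g_{s,a}}\bigr)=\ell$, hence $\Phi_*q_\rho^{ske}=2\beta\,\ell$, matching the skeleton intensity of $m$. For the node part the factor $\delta_{H_u}(da)$ selects $a=H_u$ and the fibre over the node $u$ is its descendant set $\{s:\ X_{u-}<I_s^u\}$, which is again a maximal interval $[g_{s,H_u},d_{s,H_u}]$; the same normalisation yields fibre-integral $1$, so $\Phi_*q_\rho^{nod}=\sum_u \Delta_u\,\delta_{\{\text{node }u\}}$. By the mapping theorem for Poisson measures, $\Phi_*\cq$ is then a Poisson point process with exactly the intensity of the tree-mark configuration of $m$, and superposing the (independent) skeleton and node parts completes the identification.

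The main obstacle will be the rigorous proof of this pushforward identity, especially at the node heights: one must verify that at $a=H_u$ the maximal interval on which $H\ge H_u$ coincides, up to a Lebesgue-null set, with the whole subtree rooted at $u$, so that $1/(d_{s,a}-g_{s,a})$ is constant along each fibre and integrates to $1$ — this is precisely what the normalisation $d_{s,a}-g_{s,a}$ is engineered to achieve. One also has to check the measurability of $\Phi$, confirm that the set of branch heights does not contribute to the skeleton integral, and match the pushforward length measure $\ell$ with the conditional law of $m^{ske}$ coming from the compatibility property of the $W^i$.
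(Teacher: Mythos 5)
Your proposal is correct and is essentially the paper's own argument: the paper likewise splits $q_\rho$ into skeleton and node parts and exploits the fact that, for fixed $a$, the quantities $g_{s,a},d_{s,a}$ are constant for $s$ in the sub-tree interval $[g_{t,a},d_{t,a}]$, so that the weight $1/(d_{s,a}-g_{s,a})$ integrates to $1$ over each fibre, yielding intensity $2\beta\,(h_2-h_1)\,\theta$ on a lineage band (resp.\ total mass $\theta\Delta_u$ at a node of size $\Delta_u$), exactly your fibre computation. Your repackaging of this via the pushforward map $\Phi$ onto the tree and the Poisson mapping theorem is only a more formal wrapper around the same identity (and arguably tighter than the paper's band-by-band verification), not a different route.
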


\begin{proof}
Conditionally on $H$ and under $\N$, for fixed $t\in [0,\sigma]$, $m_t^{\text{ske}}$ is a Poisson point process with intensity $2\beta \ind_{[0,H_t]}(a)da\, d\theta$. For an individual $t$, marks on the skeleton are uniform from the height $h_1$ to the height $h_2$. Thanks to snake property, if a mark appears at height $a$ on the lineage of the individual $t$, it also appears for all children of $t$, that is to say the mark appears from $g_{t,a}$ to $d_{t,a}$. We have 

$$\iint_D \frac{2\beta \theta da \, d s}{d_{s,a}-g_{s,a}}= 2\beta(h_2-h_1)$$

where $D=\{(a,s)\in[h_1,h_2]\times [g_{t,a},d_{t,a}]\}$ because for $a\in[h_1,h_2]$ fixed, $\forall s\in[g_{t,h},d_{t,h}]$, $g_{s,a}=g_{t,a}$ and $d_{s,a}=d_{t,a}$. (See Figure \ref{fig:bandemarque}). Thus the point process $q^{\text{ske}}$ give the same marks as the process $m^{\text{ske}}$.

\begin{center}
\begin{figure}[h!]
\caption{Marks under the epigraph of $H$}\label{fig:bandemarque}

\scalebox{0.8} % Change this value to rescale the drawing.
{
\begin{pspicture}(0,-3.0367188)(15.019188,3.0137188)
\psline[linewidth=0.034cm,arrowsize=0.05291667cm 2.0,arrowlength=1.4,arrowinset=0.4]{->}(0.5221875,-2.4832811)(15.002188,-2.6032813)
\psline[linewidth=0.034cm,arrowsize=0.05291667cm 2.0,arrowlength=1.4,arrowinset=0.4]{->}(0.7421875,-2.8432813)(0.7421875,2.9967186)
\psline[linewidth=0.034](0.7621875,-2.4432812)(0.9221875,-1.8232813)(1.0421875,-2.1632812)(1.2821875,-1.3432813)(1.4421875,-1.8432813)(1.4821875,-1.5832813)(1.6421875,-2.0232813)(1.9621875,-0.80328125)(2.1221876,-1.2032813)(2.3821876,0.37671876)(2.5221875,-0.46328124)(2.6221876,-0.08328125)(2.7421875,-0.48328125)(2.9621875,0.61671877)(3.1621876,-0.26328126)(3.3221874,0.25671875)(3.5221875,-0.48328125)(3.8221874,0.7367188)(4.1221876,-0.20328125)(4.2421875,0.03671875)
\psline[linewidth=0.034](4.2221875,0.03671875)(4.3821874,-0.46328124)(4.5421877,0.19671875)(4.7421875,-0.48328125)(4.9021873,0.05671875)(5.0021877,-0.16328125)(5.3221874,0.83671874)(5.5221877,0.39671874)(5.6021876,0.61671877)(5.8021874,0.27671874)(6.1621876,1.6167188)(6.5021877,0.75671875)(6.6021876,1.0767188)(7.0221877,0.15671875)(7.1421876,0.43671876)(7.3821874,-0.12328125)(7.5621877,0.63671875)(7.8821874,-0.32328126)(7.9821873,0.05671875)(8.222187,-0.50328124)(8.422188,0.17671876)(8.642187,-0.5232813)(8.822187,-0.08328125)(8.942187,-0.32328126)(9.362187,0.87671876)(9.542188,0.5367187)(9.722187,1.1767187)(10.122188,0.01671875)(10.2421875,0.35671875)(10.582188,-0.5232813)(10.762188,-0.02328125)(11.202188,-1.2232813)(11.282187,-0.86328125)(11.502188,-1.4032812)(11.7421875,-0.16328125)(11.842188,-0.40328124)(12.002188,0.13671875)(12.382188,-1.5632813)(12.542188,-1.1832813)(12.7421875,-1.7432812)(12.942187,-1.0432812)(13.202188,-2.0232813)(13.322187,-1.6232812)(13.662188,-2.5632813)(13.682187,-2.5432813)
\usefont{T1}{ptm}{m}{n}
\rput(13.869687,-2.8232813){\small $\sigma$}
\psline[linewidth=0.024cm](2.1821876,-0.9032813)(11.082188,-0.9032813)
\psline[linewidth=0.024cm](5.0221877,-0.08328125)(7.3621874,-0.06328125)
\psline[linewidth=0.024cm](4.9821873,-0.14328125)(4.8621874,-0.14328125)
\psline[linewidth=0.024cm](4.7221875,-0.50328124)(2.2221875,-0.50328124)
\psline[linewidth=0.024cm](7.3821874,-0.14328125)(7.8221874,-0.14328125)
\psline[linewidth=0.024cm](7.8621874,-0.34328124)(8.142187,-0.34328124)
\psline[linewidth=0.024cm](8.222187,-0.50328124)(10.942187,-0.50328124)
\psline[linewidth=0.02cm](2.1821876,-0.74328125)(2.3621874,-0.50328124)
\psline[linewidth=0.02cm](2.3021874,-0.9032813)(2.5821874,-0.50328124)
\psline[linewidth=0.02cm](2.5821874,-0.9032813)(2.8621874,-0.50328124)
\psline[linewidth=0.02cm](2.8421874,-0.9032813)(3.1221876,-0.50328124)
\psline[linewidth=0.02cm](3.0621874,-0.92328125)(3.3421874,-0.5232813)
\psline[linewidth=0.02cm](3.3421874,-0.92328125)(3.6221876,-0.5232813)
\psline[linewidth=0.02cm](3.6021874,-0.92328125)(3.8821876,-0.5232813)
\psline[linewidth=0.02cm](3.8821876,-0.9032813)(4.1621876,-0.50328124)
\psline[linewidth=0.02cm](4.1621876,-0.9032813)(4.4421873,-0.50328124)
\psline[linewidth=0.02cm](4.4221873,-0.9032813)(4.6821876,-0.50328124)
\psline[linewidth=0.02cm](4.6621876,-0.9032813)(5.1421876,-0.08328125)
\psline[linewidth=0.02cm](4.9221873,-0.9032813)(5.4421873,-0.06328125)
\psline[linewidth=0.02cm](5.2021875,-0.9032813)(5.7021875,-0.08328125)
\psline[linewidth=0.02cm](5.4821873,-0.9032813)(5.9821873,-0.08328125)
\psline[linewidth=0.02cm](5.7621875,-0.9032813)(6.2621875,-0.06328125)
\psline[linewidth=0.02cm](6.0221877,-0.9032813)(6.5421877,-0.06328125)
\psline[linewidth=0.02cm](6.3021874,-0.9032813)(6.8021874,-0.06328125)
\psline[linewidth=0.02cm](6.5821877,-0.9032813)(7.0821877,-0.06328125)
\psline[linewidth=0.02cm](6.8421874,-0.9032813)(7.3621874,-0.06328125)
\psline[linewidth=0.02cm](7.1221876,-0.9032813)(7.5821877,-0.14328125)
\psline[linewidth=0.02cm](7.3821874,-0.9032813)(7.8221874,-0.18328124)
\psline[linewidth=0.02cm](7.6621876,-0.9032813)(8.022187,-0.34328124)
\psline[linewidth=0.02cm](7.9221873,-0.9032813)(8.202188,-0.46328124)
\psline[linewidth=0.02cm](8.202188,-0.9032813)(8.482187,-0.50328124)
\psline[linewidth=0.02cm](8.462188,-0.9032813)(8.7421875,-0.50328124)
\psline[linewidth=0.02cm](8.702188,-0.9032813)(8.982187,-0.50328124)
\psline[linewidth=0.02cm](8.982187,-0.9032813)(9.2421875,-0.50328124)
\psline[linewidth=0.02cm](9.2421875,-0.9032813)(9.502188,-0.50328124)
\psline[linewidth=0.02cm](9.502188,-0.9032813)(9.782187,-0.50328124)
\psline[linewidth=0.02cm](9.762188,-0.9032813)(10.042188,-0.50328124)
\psline[linewidth=0.02cm](10.042188,-0.9032813)(10.302188,-0.50328124)
\psline[linewidth=0.02cm](10.282187,-0.9032813)(10.571981,-0.51417357)
\psline[linewidth=0.02cm](10.571981,-0.9032813)(10.861776,-0.51417357)
\psline[linewidth=0.02cm](10.841077,-0.9032813)(11.002188,-0.66328126)
\psline[linewidth=0.03cm](6.4421873,1.5567187)(6.4421873,-2.7432814)
\psline[linewidth=0.024cm](8.182187,-0.42328125)(4.7821875,-0.42328125)
\psline[linewidth=0.024cm](0.8421875,-0.10328125)(0.6221875,-0.10328125)
\psline[linewidth=0.024cm](0.8221875,-0.88328123)(0.6421875,-0.88328123)
\psline[linewidth=0.024cm](0.8221875,-0.42328125)(0.6221875,-0.42328125)
\usefont{T1}{ptm}{m}{n}
\rput(0.4196875,0.05671875){\small $h_2$}
\usefont{T1}{ptm}{m}{n}
\rput(0.3996875,-0.7032812){\small $h_1$}
\usefont{T1}{ptm}{m}{n}
\rput(0.3296875,-0.34328124){\small $a$}
\usefont{T1}{ptm}{m}{n}
\rput(6.6296873,-2.7632813){\small $t$}
\psline[linewidth=0.024cm,linestyle=dashed,dash=0.16cm 0.16cm](4.7821875,-0.40328124)(4.7821875,-2.7032812)
\psline[linewidth=0.024cm,linestyle=dashed,dash=0.16cm 0.16cm](8.162188,-0.42328125)(8.182187,-2.7032812)
\usefont{T1}{ptm}{m}{n}
\rput(5.2096877,-2.7832813){\small $g_{t,a}$}
\usefont{T1}{ptm}{m}{n}
\rput(8.369687,-2.8232813){\small $d_{t,a}$}
\end{pspicture} 
}

\end{figure}
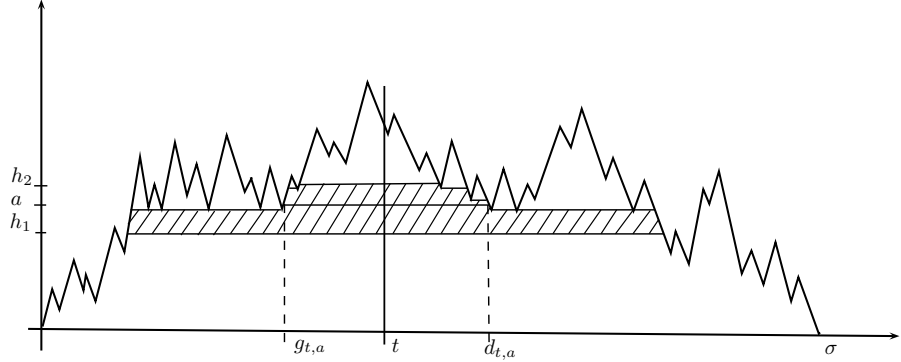
\end{center}

Conditionally on $H$, under $\N$, for fixed $t\in [0,\sigma]$, $m_t^{\text{nod}}$ puts marks on a node at height $H_u<t$ proportionally to the size $\Delta_u$ of the node. By construction of $m^{\text{nod}}$, if a mark appears at height $H_u=a$ on the lineage of an individual $t$, it also appears from $g_{t,a}$ to $d_{t,a}$. We have

$$\iint_D \frac{\Delta_u \theta \delta_{H_u}( da) ds}{d_{s,a}-g_{s,a}} = \theta\Delta_{u} $$

where $D=\{(a,s)\in[0,H_t]\times [g_{t,a},d_{t,a}]\}$. Thus the point process $q^{\text{nod}}$ give the same marks as the process $m^{\text{nod}}$.

The equality $m=m^{\text{ske}}+m^{\text{nod}}$ ends the proof.
\end{proof}

We use a notation  for the fragments of the CRT obtained from a mark $(s,a)$ under the epigraph of $H$. For $s$ and $a$ such that $s\in[0,\sigma]$ and $0\leq a\leq H_s$, we denote the fragments of the Lévy snake $(\rho^i,i\in \tilde I)$ by :
\begin{itemize}
\item the open intervals of the excursion of $H$ after $s$ and above $a$ : $((\alpha_i,\beta_i),i\in \tilde I_+)$ which are such that $\alpha_i>s$, $H_{\alpha_i}=H_{\beta_i}=a$ and for every $s'\in(\alpha_i,\beta_i)$, $H_{s'}>a$ and $H_{s,s'}=a$.
\item the open intervals of the excursion of $H$ before $s$ and above $a$ : $((\alpha_i,\beta_i),i\in \tilde I_-)$ which are such that $\beta_i<s$, $H_{\alpha_i}=H_{\beta_i}=a$ and for every $s'\in(\alpha_i,\beta_i)$, $H_{s'}>a$ and $H_{s,s'}=a$.
\item the excursion $i_s$, of $H$ above $a$ and which contains $s$ : $(\alpha_{i_s},\beta_{i_s})$ such that $\alpha_{i_s}<s<\beta_{i_s}$, $H_{\alpha_{i_s}}=H_{\beta_{i_s}}=a$ and for every $s'\in(\alpha_{i_s},\beta_{i_s})$, $H_{s'}>a$ and $H_{s,s'}>a$.
\item the excursion $i_0$ of $H$ without the mark  $(s,a)$ : $\{ s\in[0,\sigma]; H_{s,s'}<a \}=[0,\alpha_{i_0})\cup (\beta_{i_0},\sigma]$
\end{itemize}
We write $\tilde I=\tilde I_- \cup \tilde I_+ \cup \{ i_s,i_0 \}$ (see Figure \ref{figure:fragments}). Then the family $(\rho^i,i\in \tilde{I})$ contains the exploration processes of the fragments obtained when a single cutpoint is selected. 
%For the mark on the skeleton $(s,a)$, the set $\tilde I_-\cup \tilde I_+$ is empty.\\
We are interested in the computation of $\tilde \nu_\rho$, the "law" of $(\rho^i,i\in \tilde I)$ under $\N(d\rho)q_\rho(ds,da)$.

\begin{center}
\begin{figure}[ht]
\caption{Fragments of the Lévy snake obtained from a mark $(s,a)$}\label{figure:fragments}

\scalebox{0.8} % Change this value to rescale the drawing.
{
\begin{pspicture}(0,-3.2167187)(14.799188,3.1937187)
\psline[linewidth=0.034cm,arrowsize=0.05291667cm 2.0,arrowlength=1.4,arrowinset=0.4]{->}(0.3021875,-2.3032813)(14.782187,-2.4232812)
\psline[linewidth=0.034cm,arrowsize=0.05291667cm 2.0,arrowlength=1.4,arrowinset=0.4]{->}(0.5221875,-2.6632812)(0.5221875,3.1767187)
\psline[linewidth=0.034cm,linestyle=dashed,dash=0.16cm 0.16cm](0.2821875,-0.30328125)(13.502188,-0.30328125)
\psline[linewidth=0.034](0.5421875,-2.2632813)(0.7021875,-1.6432812)(0.8221875,-1.9832813)(1.0621876,-1.1632812)(1.2221875,-1.6632812)(1.2621875,-1.4032812)(1.4221874,-1.8432813)(1.7421875,-0.62328124)(1.9021875,-1.0232812)(2.1621876,0.55671877)(2.3021874,-0.28328124)(2.4021876,0.09671875)(2.5221875,-0.30328125)(2.7421875,0.7967188)(2.9421875,-0.08328125)(3.1021874,0.43671876)(3.3021874,-0.30328125)(3.6021874,0.9167187)(3.9021876,-0.02328125)(4.0221877,0.21671875)
\psline[linewidth=0.034](4.0021877,0.21671875)(4.1621876,-0.28328124)(4.3221874,0.37671876)(4.5221877,-0.30328125)(4.6821876,0.23671874)(4.7821875,0.01671875)(5.1021876,1.0167187)(5.3021874,0.57671875)(5.3821874,0.7967188)(5.5821877,0.45671874)(5.9421873,1.7967187)(6.2821875,0.93671876)(6.3821874,1.2567188)(6.8021874,0.33671874)(6.9221873,0.61671877)(7.1621876,0.05671875)(7.3421874,0.81671876)(7.6621876,-0.14328125)(7.7621875,0.23671874)(8.002188,-0.32328126)(8.202188,0.35671875)(8.422188,-0.34328124)(8.602187,0.09671875)(8.722187,-0.14328125)(9.142187,1.0567187)(9.322187,0.71671873)(9.502188,1.3567188)(9.902187,0.19671875)(10.022187,0.5367187)(10.362187,-0.34328124)(10.542188,0.15671875)(10.982187,-1.0432812)(11.062187,-0.68328124)(11.282187,-1.2232813)(11.522187,0.01671875)(11.622188,-0.22328125)(11.782187,0.31671876)(12.162188,-1.3832812)(12.322187,-1.0032812)(12.522187,-1.5632813)(12.722187,-0.86328125)(12.982187,-1.8432813)(13.102187,-1.4432813)(13.442187,-2.3832812)(13.462188,-2.3632812)
\psline[linewidth=0.027999999cm,linestyle=dashed,dash=0.16cm 0.16cm](2.0221875,-0.30328125)(2.0621874,-2.3032813)
\psline[linewidth=0.027999999cm,linestyle=dashed,dash=0.16cm 0.16cm](4.5421877,-0.28328124)(4.5421877,-2.2632813)
\psline[linewidth=0.027999999cm,linestyle=dashed,dash=0.16cm 0.16cm](8.002188,-0.30328125)(8.042188,-2.2832813)
\psline[linewidth=0.027999999cm,linestyle=dashed,dash=0.16cm 0.16cm](10.7421875,-0.34328124)(10.802188,-2.3232813)
\psline[linewidth=0.027999999cm,linestyle=dashed,dash=0.16cm 0.16cm](6.1821876,2.2567186)(6.2621875,-3.0032814)
\psline[linewidth=0.034cm,tbarsize=0.07055555cm 5.0]{|-|}(2.0821874,-2.6632812)(4.5821877,-2.6632812)
\psline[linewidth=0.034cm,tbarsize=0.07055555cm 5.0]{|-|}(8.062187,-2.6432812)(10.782187,-2.6432812)
\usefont{T1}{ptm}{m}{n}
\rput(2.5596876,-2.0232813){\small $\alpha_{i_0}$}
\usefont{T1}{ptm}{m}{n}
\rput(5.1196876,-2.0432813){\small $\alpha_{i_s}$}
\usefont{T1}{ptm}{m}{n}
\rput(6.5496874,-2.0632813){\small $s$}
\usefont{T1}{ptm}{m}{n}
\rput(8.549687,-2.0832813){\small $\beta_{i_s}$}
\usefont{T1}{ptm}{m}{n}
\rput(11.229688,-2.1232812){\small $\beta_{i_0}$}
\usefont{T1}{ptm}{m}{n}
\rput(13.649688,-2.6432812){\small $\sigma$}
\usefont{T1}{ptm}{m}{n}
\rput(9.519688,-3.0032814){\small $\tilde{I}_+$}
\usefont{T1}{ptm}{m}{n}
\rput(3.2796874,-2.9832811){\small $\tilde{I}_-$}
\usefont{T1}{ptm}{m}{n}
\rput(0.2096875,-0.04328125){\small $a$}
\end{pspicture} 
}

\end{figure}
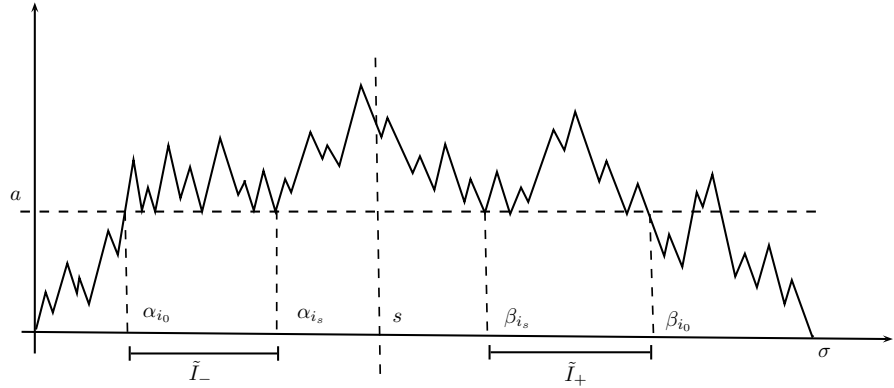
\end{center}

\section{The dislocation process}

Let $\ct$ be the set of jumping times of the Poisson process $\cq$. For $\theta\in\ct$, we consider the processes $\cl^{(\theta)}=(\rho^i ; i\in I^{(\theta)})$ and $\cl^{(\theta-)}=(\rho^i ; i\in I^{(\theta-)})$ defined in the Section \ref{relation}. The life times $(\sigma(\rho^i);i\in I^{(\theta)})$ (resp. $(\sigma(\rho^i);i\in I^{(\theta-)})$ ), ranked by decreasing order, of these Lévy snakes correspond to the "sizes" of the fragments at time $\theta$ (resp. before time $\theta$). Notice that, for $\theta\in\ct$ fixed, the families $\cl^{(\theta)}$ and $\cl^{(\theta-)}$ change in one family : the snake $\rho^{i_\theta}$ breaks in one family $(\rho^i, i\in \tilde{I}^{(\theta)})\in \cl^{(\theta)}$. Thus we get 
$$\cl^{(\theta)} = \left( \cl^{(\theta-)}\setminus \{ \rho^{i_\theta} \} \right) \cup \{ \rho^{i} ; i\in \tilde{I}^{(\theta)} \}.$$

%From the representation given in the previous Section, this fragmentation is given by the measure $q_\rho$, that is to say by the measure $\tilde{\nu}_\rho$. Thanks to  Lemma \ref{sjk}, 
%$$\sum_{\theta \in \ct }\delta_{(\theta, \cl^{(\theta-)}, (\rho^i,i\in \tilde{I}^{(\theta)}))}$$
%is a point process with intensity $d\theta ~ \delta_{\cl^{(\theta-)}} ~ \sum_{\rho \in \cl^{(\theta-)}} \tilde{\nu}_\rho$. 
%By taking the expectation over the snakes conditionally on their length, the process 
%$$\sum_{\theta \in \ct }\delta_{(\theta, (\sigma(\rho),\rho\in\cl^{(\theta-)}), (\sigma(\rho^i),i\in \tilde{I}^{(\theta)}))}$$\label{mesuredislocation}
%is a Poisson point process with intensity $d\theta ~ \delta_{(\sigma(\rho),\rho\in\cl^{(\theta-)})} ~ \sum_{\rho \in \cl^{(\theta-)}} \nu_{\sigma(\rho)}$

Let $\nu_{r}$ be the distribution of the decreasing lengths of Lévy snakes under $\tilde{\nu}_\rho$, integrated w.r.t. the law of $\rho$ conditionally on $\sigma(\rho)=r$, that is to say, for any non-negative measurable function $F$ defined on $\cs^\downarrow$
$$\int_{\cs^\downarrow}F(\mathbf{x}) \nu_r(d\mathbf{x}) = \N_r\left[\int F ((\sigma^i,i\in \tilde I)) \tilde \nu_\rho (d(\rho^i , i\in \tilde I))   \right]$$
where the $( \sigma^i , i\in \tilde I)$ are the lengths of the fragments $(\rho^i,  i\in \tilde I)$ ranked in decreasing order.

The family of measures $(\nu_r,r>0)$ is then the family of dislocation measures defined in the Section \ref{intro:section3}. Indeed, the formula above means that $\nu_r$ gives the distribution of the lengths of the fragments (ranked by decreasing order) coming from the dislocation of one fragment of size $r$. For $\mathbf{x}=(x_i,i\in I^{(\theta)})\in \cs^\downarrow$, if we consider the dislocation of all the fragments of $\mathbf{x}$ with respective sizes $x_i>0$, we found the formula of the dislocation measure $\nu_{x_i}$ given in Section \ref{intro:section3}:

$$\int F(\mathbf{s}) \tilde{\nu}_{\mathbf{x}}(d\mathbf{s}) = \sum_{i\geq1,x_i>0} \int F(\mathbf{x}^{i,\mathbf{s}})\nu_{x_i}(d\mathbf{s})$$

where $\tilde{\nu}_{\mathbf{x}}$ is defined as the intensity of a Poisson point process and is the law of the lengths $(x_i,i>0)$.

\subsection{Computation of dislocation measure}
We are interested in the family of dislocation measures $(\nu_r,r>0)$. Recall that $\N(.)=\int_{\R_+}\pi_*(dr)\N_r(.)$. The computation is easier under $\N$, then we compute for any $\lambda \geq 0$ :
\begin{eqnarray*}
\int_{\R_+ \times \cs^\downarrow} F (\mathbf{x})  \nu_r (d\mathbf{x})\pi_*(dr) &=& \N \left[  \int q_\rho (ds,da) F((\sigma^i,i\in \tilde I)) \right]\\
&=& \N \left[  \int q^{nod}_\rho (ds,da) F((\sigma^i,i\in \tilde I)) \right]+
\N \left[  \int q^{ske}_\rho (ds,da) F(\sigma^{i_0},\sigma^{i_s}) \right]
\end{eqnarray*}
where we use the decomposition of $q_\rho$ for the second equality. The first part has already been computed in \cite{ad:falp}. Jumping times of the process $\rho$ are represented by a subordinator $W$ with Laplace exponent $\psi'-\alpha$. Then we construct the length of the excursions of the snake by $S_W$ where $S$ is a subordinator with exponent $\psi^{-1}$, independent of $W$. Then we have :
$$ \N \left[  e^{-\lambda \sigma} \int q^{nod}_\rho (ds,da) F((\sigma^i,i\in \tilde I)) \right]= 
\int \pi(dv) \E \left[  S_v e^{-\lambda S_v} F\left( (\Delta S_u , u\leq v) \right)\right].$$
We now compute the second part. Thanks to the definition of the snake, $\rho^{ske}=0$ if and only if $\beta=0$ and in this case, we don't put mark on the skeleton of the tree. We assume that $\beta>0$ and we write the key lemma of this article which prove the Part 2 of the Theorem \ref{theointro}.

\begin{lem}\label{dislocation}
We set $\lambda_1>0$ and $\lambda_2> 0$.
$$\N \left[  \int q^{ske}_\rho (ds,da) \sigma^{i_s}e^{-\lambda_1\sigma^{i_s}-\lambda_2\sigma^{i_0}} \right] = \frac{2\beta }{\psi'\psi^{-1}(\lambda_1)\psi'\psi^{-1}(\lambda_2)}.$$
\end{lem}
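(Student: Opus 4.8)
The goal is to compute the quantity
\[
\mathcal I := \N \left[  \int q^{ske}_\rho (ds,da)\; \sigma^{i_s}\,\expp{-\lambda_1\sigma^{i_s}-\lambda_2\sigma^{i_0}} \right],
\]
which involves only the skeleton marks. The plan is to first unfold the definition of $q^{ske}_\rho$, writing $\int q^{ske}_\rho(ds,da)(\cdots) = \int_0^\sigma ds\int_0^{H_s} \frac{2\beta\,da}{d_{s,a}-g_{s,a}}(\cdots)$. For a mark at position $(s,a)$ under the epigraph, the fragment $\rho^{i_s}$ is the excursion of $H$ above level $a$ containing $s$, while $\rho^{i_0}$ is everything outside the interval $(\alpha_{i_0},\beta_{i_0})=(g_{s,a},d_{s,a})$, i.e.\ the tree with the whole subtree above $a$ removed. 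The factor $(d_{s,a}-g_{s,a})^{-1}$ is exactly the reciprocal length of the excursion interval containing $s$, so integrating $ds$ over that interval first should collapse the $ds$-integral against this factor and leave a cleaner integral over levels $a$ and over the excursions themselves.

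**Reducing to the Bismut/representation formulas.** The natural tool is the Bismut spinal decomposition (Proposition \ref{Bismut}) together with the Poisson representation (Lemma \ref{decomp poisson}). Picking a point $s$ ``uniformly'' under $\N\!\int_0^\sigma ds$ and then a level $a\in[0,H_s]$ decomposes the tree along the ancestral branch of $s$: at height $a$ one sees, on one side, the subtree above $a$ (which carries $\sigma^{i_s}$ as its length), and on the other side the complementary tree whose length, together with the part of the branch below $a$, makes up $\sigma^{i_0}$. I would use Proposition \ref{Bismut} to turn $\N[\int_0^\sigma ds\, F(\text{left},\text{right})]$ into $\int \M(d\mu\,d\nu)\int\P^*_\mu\P^*_\nu$, so that the two sides of the branch at height $a$ become two independent exploration processes. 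The subordinator description comes in through $\N[1-\expp{-\lambda\sigma}]=\psi^{-1}(\lambda)$ and the fact that the inverse local time of $-I$ is a subordinator with exponent $\psi^{-1}$; the excursion lengths above a given level are governed by this law, so the Laplace transforms $\N[\sigma\,\expp{-\lambda\sigma}] = (\psi^{-1})'(\lambda) = \psi^{-1}(\lambda)'$ and $\N[1-\expp{-\lambda\sigma}]=\psi^{-1}(\lambda)$ should be the building blocks. Because $\frac{d}{d\lambda}\psi^{-1}(\lambda) = 1/\psi'(\psi^{-1}(\lambda))$, the appearance of $\psi'\psi^{-1}(\lambda_1)$ and $\psi'\psi^{-1}(\lambda_2)$ in the denominator strongly signals that each of the two factors arises by differentiating a $\psi^{-1}$, i.e.\ from an $\N[\sigma\,\expp{-\lambda\sigma}]$-type term on each side of the spine.

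**Assembling the two factors.** Concretely, the weight $\sigma^{i_s}\expp{-\lambda_1\sigma^{i_s}}$ applied to the subtree above $a$ should produce, after integrating over the excursions at that level, a factor of the form $\N[\sigma\expp{-\lambda_1\sigma}] = 1/\psi'(\psi^{-1}(\lambda_1))$; the weight $\expp{-\lambda_2\sigma^{i_0}}$ on the complementary side should, after integrating the level $a$ down the spine and using the exponential formula for the subordinator governing the branch, produce the companion factor $1/\psi'(\psi^{-1}(\lambda_2))$. The constant $2\beta$ is inherited directly from the density of $q^{ske}_\rho$. The key algebraic identity I expect to invoke repeatedly is $(\psi^{-1})'(\lambda)=1/\psi'(\psi^{-1}(\lambda))$, which converts derivatives of $\psi^{-1}$ into the stated denominator.

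**Anticipated main obstacle.** The delicate step is handling the $ds$-integral together with the factor $(d_{s,a}-g_{s,a})^{-1}$ and disentangling exactly which portion of the total length $\sigma$ is assigned to $\sigma^{i_s}$ versus $\sigma^{i_0}$, without double-counting the spine contribution or the branch below level $a$. In particular one must verify that integrating $ds$ across the excursion interval $(g_{s,a},d_{s,a})$ precisely cancels the length factor and replaces a ``size-biased'' pick of the excursion by a plain $\N$-expectation of that excursion's length weighted by $\sigma^{i_s}\expp{-\lambda_1\sigma^{i_s}}$ --- this is where the extra factor of $\sigma^{i_s}$ in the integrand, as opposed to a naive $\expp{-\lambda_1\sigma^{i_s}}$, is consumed, and getting that bookkeeping right (so that both numerators become derivatives rather than one being size-biased twice) is the crux. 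Once the spinal decomposition is correctly set up so that the two sides factorize and each contributes one $(\psi^{-1})'$, the rest is a routine application of the Laplace-transform identities above.
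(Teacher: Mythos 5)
Your plan follows the paper's own proof essentially step for step: unfold $q^{ske}_\rho$, cancel the size-biasing factor $\sigma^{i_s}$ against $(d_{s,a}-g_{s,a})^{-1}$ (valid since $\sigma^{i_s}=d_{s,a}-g_{s,a}$), apply the Bismut decomposition of Proposition \ref{Bismut} so the two sides of the spine factorize under $\M(d\mu\,d\nu)\P^*_\mu\P^*_\nu$, and conclude with the Poisson representation of Lemma \ref{decomp poisson} and the exponential formula, the identity $(\psi^{-1})'=1/\psi'\circ\psi^{-1}$ accounting for the denominators. The only cosmetic differences are that the paper computes the transform in the pair $(\sigma^{i_s},\sigma)$ and recovers the stated form by the substitution $\lambda_1\mapsto\lambda_1+\lambda_2$, whereas you weight $\sigma^{i_0}$ directly, and that in the paper the factors $1/\psi'\psi^{-1}(\lambda_i)$ arise from integrating $e^{-h\,\psi'\psi^{-1}(\lambda_i)}$ over spine heights, via $\psi'(\lambda)=\alpha+2\beta\lambda+\int_0^\infty l\,(1-e^{-\lambda l})\,\pi(dl)$, rather than literally from $\N\left[\sigma\,e^{-\lambda\sigma}\right]$ --- numerically the same quantity.
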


We recall that the measure $\hat{\nu}_r^{ske}$ gives the law of the non-reordering of the two lengths given by the fragmentation from $\nu_r^{ske}$.

\begin{proof}[Proof of Part 2 of Theorem \ref{theointro}]
We use Lemma \ref{dislocation}, let $x_1$ and $x_2$ be the lengths of the fragments from $\nu_\rho^{ske}$ ranked by decreasing order among the elements of $x\in \cs^{\downarrow}$, we get
$$ \int_{\R_+\times \cs^{\downarrow}}x_1e^{-\lambda_1x_1-\lambda_2x_2}\hat{\nu}_r^{ske}(d\mathbf{x})\pi_*(dr)  = \frac{2\beta }{\psi'\psi^{-1}(\lambda_1)\psi'\psi^{-1}(\lambda_2)}.$$
We integrate w.r.t. $\lambda_1$ and we take the primitive which vanishes in 0, and we do the same with $\lambda_2$. We get that, for $\lambda_1>0$ and $\lambda_2>0$,
$$\int_{\R_+\times \cs^{\downarrow}} \frac{1}{x_2} (1-e^{-\lambda_1 x_1})(1-e^{-\lambda_2 x_2}) \hat{\nu}_r^{ske}(d\mathbf{x})\pi_*(dr) = 2\beta \psi^{-1}(\lambda_1)\psi^{-1}(\lambda_2).$$
Thus, under $\hat{\nu}_r^{ske}(d\mathbf{x})\pi_*(dr)$, the lengths of the two fragments are independent.
\end{proof}

\begin{proof}[Proof of Lemma \ref{dislocation}] In order to prove the lemma, we compute $A_2:=\N \left[  \int q^{ske}_\rho (ds,da) G(\sigma^{i_s},\sigma) \right]$ where $G(x,y)=xe^{-\lambda_1 x-\lambda_2 y}$. 
\begin{eqnarray*}
A_2 &=& \N \left[ 2\beta \int_0^\sigma ds \int \frac{1}{d_{s,a}-g_{s,a}} G(\sigma^{i_s},\sigma)\ind_{(0\leq a\leq H_s) }da \right]\\
&=&  \N \left[  2\beta \int_0^\sigma ds \int \frac{1}{d_{s,a}-g_{s,a}} G(d_{s,a}-g_{s,a},\sigma)\ind_{(0\leq a\leq H_s) }da \right].\\
\end{eqnarray*}

We denote for $0\leq s \leq \sigma$ and $0\leq a \leq H_s$ fixed
$$d_{s,a}-s=\inf\{t\geq 0, H_{(s+t)\wedge \sigma}\leq a\}=J_2(a)$$
$$s-g_{s,a}=\inf\{t\geq 0, H_{(s-t)_+}\leq a\}=J_1(a).$$
We get
$$d_{s,a}-g_{s,a}=J_2(a)+J_1(a),$$
$$\sigma = J_2(0)+J_1(0),$$

$$ A_2 = \N \left[  2\beta \int_0^\sigma ds \int\ind_{0\leq a\leq H_s }da  \frac{ G(J_1(a)+J_2(a),J_1(0)+J_2(0))}{J_1(a)+J_2(a)}\right].$$

We use the generalization for Lévy processes of Bismut formula, Proposition \ref{Bismut}.

\begin{eqnarray*}
A_2 &=& 2\beta \int \M(d\mu d\nu) \E\left[ \int \ind_{0\leq a \leq H(\mu) }da  \frac{ G(J^\nu(a)+J^\mu(a),J^\nu(0)+J^\mu(0))}{J^\nu(a)+J^\mu(a)}  \right]\\
&=& 2\beta\int \M(d\mu d\nu) \ind_{0\leq a \leq H(\mu) }da \, \E_\mu^* \left[ e^{-\lambda_1 J^\mu(a)-\lambda_2 J^\mu(0)} \right]\E_\nu^* \left[ e^{-\lambda_1 J^\nu(a)-\lambda_2 J^\nu(0)} \right]
\end{eqnarray*}

where $J^\mu(a)$ is the first passage time of the process $H^{(\mu)}$ at level $a$. By the Poissonnian decomposition of $\rho$ under $\P_\mu^*$ w.r.t. the excursions of $\rho$ above its minimum, under $\P_\mu^*$, we replace respectively $J^\mu(0)$ and $J^{\mu}(a)$ by ${\sum_{i\in I}}\sigma^i$ and ${\sum_{h_i\geq a}}\sigma^i$. We separate ${\sum_{i\in I}}\sigma^i={\sum_{h_i\geq a}}\sigma^i + {\sum_{h_i<a}}\sigma^i$.\\

$\displaystyle A_2 = 2\beta \int \M(d\mu d\nu)\int \ind_{0\leq a \leq H(\mu) }da \E^*_{\mu}\left[ exp\left(-(\lambda_1+\lambda_2) {\sum_{h_i \geq a}}\sigma^i -\lambda_2{\sum_{h_i < a}}\sigma^i  \right) \right]$\\
\begin{flushright}
$\displaystyle \E^*_{\nu}\left[ exp\left(-(\lambda_1+\lambda_2) {\sum_{h_i \geq a}}\sigma^i -\lambda_2{\sum_{h_i < a}}\sigma^i  \right) \right].$\\
\end{flushright}

Using standard properties of Poisson point measures, the atoms above level $a$ are independent of the atoms below, the expectations can be separated.

$$\E^*_{\mu}\left[ e^{-(\lambda_1+\lambda_2) {\sum_{h_i\geq a}}\sigma^i -\lambda_2{\sum_{h_i < a}}\sigma^i } \right] = 
\E^*_{\mu}\left[ e^{-(\lambda_1+\lambda_2) {\sum_{h_i\geq a}}\sigma^i   }\right] \E^*_{\mu}\left[ e^{-\lambda_2{\sum_{h_i<a}}\sigma^i}\right].$$

We use Lemma \ref{decomp poisson}, and the equality $\psi^{-1}(\lambda)=\N\left[1-e^{-\lambda \sigma} \right]$, we get 
$$\E^*_{\mu}\left[ e^{-(\lambda_1+\lambda_2) {\sum_{h_i\geq a}}\sigma^i   }\right]=e^{-\mu([a,H(\mu)])\N\left[1-e^{-(\lambda_1+\lambda_2)\sigma}  \right]} = e^{-\mu([a,b])\psi^{-1}(\lambda_1+\lambda_2)}.$$
And we do the same for the second expectation.

\begin{eqnarray*}
A_2 &=& 2\beta  \int_0^\infty db e^{-\alpha b}\int_0^b da\M_b\left[ e^{-(\mu+\nu)([a,b])\psi^{-1}(\lambda_1+\lambda_2) } e^{-(\mu+\nu)([0,a))\psi^{-1}(\lambda_2)}\right].\\
\end{eqnarray*}
Then,\\
\begin{tabular}{rcl}
$\displaystyle \M_b\left[ e^{-((\mu+\nu)([a,b])\psi^{-1}(\lambda_1+\lambda_2) } e^{-(\mu+\nu)([0,a))\psi^{-1}(\lambda_2)}\right] $&&\\
\multicolumn{3}{l}{\hspace{1cm}$=\displaystyle \M_b\left[ e^{-((\mu+\nu)([a,b])\psi^{-1}(\lambda_1+\lambda_2) }\right]\M_b \left[ e^{-(\mu+\nu)([0,a))\psi^{-1}(\lambda_2)} \right]$}\\
\multicolumn{3}{l}{\hspace{1cm}= $\displaystyle e^{-2(b-a)\beta\psi^{-1}(\lambda_1+\lambda_2)}  exp\left(-\int_a^b dx \int_0^\infty l\pi(dl) (1-e^{-l\psi^{-1}(\lambda_1+\lambda_2)}) \right)$ }\\
\multicolumn{3}{l}{\hspace{1cm} $\displaystyle e^{ -2a\beta\psi^{-1}(\lambda_2)} \displaystyle exp\left(-\int_0^a dx \int_0^\infty l\pi(dl) (1-e^{-l\psi^{-1}(\lambda_2)}) \right)$ }\\
\multicolumn{3}{l}{ \hspace{1cm}$= \displaystyle e^{\alpha b} e^{-(b-a)\psi'\psi^{-1}(\lambda_1+\lambda_2)-a\psi'\psi^{-1}(\lambda_2)} .$}
\end{tabular}

We recall the expression of $A_2$
\begin{eqnarray*}
A_2 &=& 2\beta \int_0^\infty db \frac{e^{-b\psi'\psi^{-1}(\lambda_2)}-e^{-b\psi'\psi^{-1}(\lambda_1+\lambda_2)}}{\psi'\psi^{-1}(\lambda_1+\lambda_2)-\psi'\psi^{-1}(\lambda_2)}\\
&=& \frac{2\beta}{\psi'\psi^{-1}(\lambda_1+\lambda_2)-\psi'\psi^{-1}(\lambda_2)}\left( \frac{1}{\psi'\psi^{-1}(\lambda_2)}-\frac{1}{\psi'\psi^{-1}(\lambda_1+\lambda_2)}  \right)\\
&=& \frac{2\beta }{\psi'\psi^{-1}(\lambda_2)\psi'\psi^{-1}(\lambda_1+\lambda_2)}.
\end{eqnarray*}

We use the equality $\N \left[  \int q^{ske}_\rho (ds,da)\sigma^{i_s}G(\sigma^{i_s},\sigma)\right] = \N \left[  \int q^{ske}_\rho (ds,da)\sigma^{i_s}e^{-(\lambda_1+\lambda_2)\sigma^{i_s}-\lambda_2\sigma^{i_0}}\right]$, we finally get the result.
\end{proof}

\subsection{Brownian case}

A similar result has been obtained by Abraham and Serlet \cite{as:psf} in the Brownian case and conditionally on $\sigma=1$. They use the same construction of the marks on the skeleton given by Aldous and Pitman \cite{ap:sac}. 

We consider a standard Brownian motion with Laplace exponent $\psi(\lambda)= \frac{\lambda^2}{2}$ and we denote by $\Gamma(de)$ the law of the Brownian excursion $e$. Thanks to \cite{lg:sbprspde}, Section VIII.3, the height process of the Brownian motion is given by $H_t = 2(X_t-I_t)$. We resume the computation of \cite{as:psf} by taking marks under the epigraph  of $H$, we get 
$$\int F(\sigma^{i_s},\sigma)\nu(ds) = \int \Gamma(de) \int_0^\sigma ds \int_0^{2e(s)} dt \frac{F(\sigma^{i_s},\sigma)}{\sigma^{i_s}}$$
where $\nu$ is the dislocation measure of \cite{as:psf}. The computation of \cite{as:psf} uses the law the two independent 3-dimensional Bessel processes, then we get
$$\int F(\sigma^{i_s},\sigma)\nu(ds) = \frac{1}{4\pi} \int_0^1 \frac{dz}{\sqrt{z(1-z)}} \int_0^\infty d\sigma \frac{F(\sigma z,\sigma)}{\sigma z}.$$
As before, we compute with $F(x,y)=xe^{-\lambda_1 x - \lambda_2 y}$.
\begin{eqnarray*}
\int F(\sigma^{i_s},\sigma)\nu(ds) &=& \frac{1}{4\pi} \int_0^1 \frac{dz}{\sqrt{z(1-z)}} \int_0^\infty d\sigma e^{-\lambda_1 \sigma z - \lambda_2 \sigma}\\
&=& \frac{1}{4\pi} \int_0^1 \frac{dz}{\sqrt{z(1-z)}} \frac{1}{\lambda_1 z + \lambda_2}
\end{eqnarray*}
For the end of this computation, we use the two changes of variable : $z\leftrightarrow sin^2 x$ and then $t\leftrightarrow tan x$.
\begin{eqnarray*}
\int F(\sigma^{i_s},\sigma)\nu(ds) &=& \frac{1}{2\pi} \int_0^{\frac{\pi}{2}} \frac{dx}{\lambda_1 sin^2 x + \lambda_2}\\
&=& \frac{1}{2\pi} \int_0^\infty \frac{dt}{(\lambda_1+\lambda_2)t^2+\lambda_2}
\end{eqnarray*}

We integrate a last time, we get the same result as in Lemma \ref{dislocation} :
$$\int F(\sigma^{i_s},\sigma)\nu(ds) = \frac{1}{4}\frac{1}{\sqrt{\lambda_2(\lambda_1+\lambda_2)}}.$$
\\

\begin{center}
\scshape{Acknowledgement}
\end{center}
I am grateful to my PhD. advisor Romain Abraham for his helpful discussions.

\end{document}